 \documentclass[11pt]{amsart}
\usepackage{amsmath}
\usepackage{amssymb}
\usepackage{graphicx}
\usepackage{stmaryrd}
\usepackage{tikz}
\usepackage{psfrag}
\usepackage{bm}

\newtheorem{theorem}{Theorem}[section]

\newtheorem{lemma}[theorem]{Lemma}

\theoremstyle{definition}
\newtheorem{definition}{Definition}[section]

\newtheorem{remark}{Remark}[section]

\numberwithin{equation}{section}

\newcommand{\B}{\mathbb B}
\newcommand{\R}{\mathbb R}

\newcommand{\U}{\mathcal U}

\newcommand{\A}{\mathbb A}

\newcommand{\T}{\mathcal T}

\newcommand{\dist}{\operatorname{dist}}

\newcommand{\dv}{\operatorname{div}}
\renewcommand{\mod}{\operatorname{mod}}

\newcommand{\rad}{\operatorname{rad}}

\title[Divergence-form equations with nowhere $C^1$ Lipschitz  solutions]{Divergence-form equations admitting nowhere $C^1$ Lipschitz weak solutions}

\date{\today}

\author{Menglan Liao}
\address{School of Mathematics, Hohai University, Nanjing, Jiangsu Province, China}
\email{liaoml@hhu.edu.cn}

 \author{Baisheng Yan}
 \address{Department of Mathematics\\ Michigan State University\\ East Lansing, MI 48824, USA}
   \email{yanb@msu.edu}

\keywords{Divergence-form equations,  differential relations,  convex integration, $\T_N$-configurations, Condition $O_N$, Lipschitz weak solutions that are nowhere $C^1$}

\subjclass{35A01, 35D30, 35F20, 49A30, 49Q20}

\begin{document}

\begin{abstract}
We study a class of partial differential equations in divergence form that admit highly irregular Lipschitz weak solutions. By reformulating these divergence-form equations as a first-order partial differential relation and adapting the convex integration scheme recently developed in \cite{GKY26} for irregular diffusion equations, we show that the same structural Condition~$O_N$ introduced there also ensures the existence of Lipschitz weak solutions that are nowhere $C^1$ for the corresponding time-independent equations in bounded domains, under suitable boundary data. In particular, for the smooth strongly polyconvex functions on $\mathbb{R}^{2\times n}$ constructed  in that paper  for all $n \ge 2$, the associated Euler--Lagrange equations admit Lipschitz weak solutions that are nowhere $C^1$ and satisfy zero boundary conditions in any bounded domain of $\mathbb{R}^n$.  
Our approach relies on new building blocks constructed from the same wave cone and $\mathcal{T}_N$-configurations employed in the analysis of diffusion equations.
\end{abstract}

\maketitle

%%%%%%%%%%%%%%%%%%%%%%%%%%%%%%%%%%%%%%%%%%%%%%%%%%%%%%%%%%%%%%%%%%%%%%%%%%%%%%%%

\section{Introduction and Main Results}

We study a class of partial differential equations in divergence form:
\begin{equation}\label{DE}
\operatorname{div} \sigma(Du(x)) = 0 \quad \text{in } \Omega,
\end{equation}
where $\Omega \subset \mathbb{R}^n$ is a bounded domain and 
$u \colon \Omega \to \mathbb{R}^m$, $u(x) = (u^1,\dots,u^m)$, is the unknown function with gradient matrix
\[
Du(x) = \left( \frac{\partial u^i}{\partial x_j} \right) \in \mathbb{R}^{m \times n}.
\]
Here $m,n \ge 1$, and $\sigma \colon \mathbb{R}^{m \times n} \to \mathbb{R}^{m \times n}$ is a given continuous function. 
We denote by $\mathbb{R}^{m \times n}$ the Euclidean space of real $m \times n$ matrices with the inner product
$
\langle A, B \rangle = \operatorname{tr}(A^{T}B).
$

By a (very) \emph{weak solution} of equation \eqref{DE} (in the sense of distributions), we mean a function $u\in  W^{1,1}_{\mathrm{loc}}(\Omega;\R^m)$ such  that $\sigma(Du) \in L^1_{\mathrm{loc}}(\Omega;\R^{m\times n})$  and  
\begin{equation}\label{weak-sol}
 \int_{\Omega}  \langle  \sigma(D u(x)), D\varphi(x) \rangle\, dx =0\quad  \forall\, \varphi\in  C^\infty_0(\Omega;\R^m).
\end{equation}
This condition is equivalent to the row-wise formulation
\[
 \int_{\Omega}  \Big ( \sum_{j=1}^n  \sigma_{ij}(D u(x))  \partial_{x_j}\phi(x) \Big ) dx =0\quad  \forall\, \phi\in  C^\infty_0(\Omega),
 \quad i=1,\dots, m. 
 \] 

 When $m,n \ge 2$, equation \eqref{DE} is a system of $m$ partial differential equations in  $\Omega\subset \R^n.$ 
In this paper, we are primarily concerned with equations \eqref{DE} that admit {\em Lipschitz} weak solutions which are nowhere $C^1$ in $\Omega$. 
Such equations are referred to as \emph{irregular}.

Structural assumptions on $\sigma$ are central to the study of \eqref{DE},
particularly for systems. For example, $\sigma$ is called 
\emph{strongly rank-one monotone} if
\begin{equation}\label{r-mono}
\langle \sigma(A + p \otimes a) - \sigma(A), \, p \otimes a \rangle
\ge \nu |p|^2 |a|^2
\end{equation}
for all $A \in \mathbb{R}^{m \times n}$, $p \in \mathbb{R}^m$, and
$a \in \mathbb{R}^n$, with some $\nu > 0$.
In this case, equation \eqref{DE} is usually said to be  \emph{strongly elliptic} in the
Legendre--Hadamard sense. Moreover, $\sigma$ is called \emph{strongly quasimonotone} if
\begin{equation}\label{q-mono}
\int_\Omega \langle \sigma(A + D\varphi), \, D\varphi \rangle \, dx
\ge \nu \int_\Omega |D\varphi|^2 \, dx
\end{equation}
for all $A \in \mathbb{R}^{m \times n}$ and
$\varphi \in C_0^\infty(\Omega;\mathbb{R}^m)$; see
\cite{CZ92,Fu87,Ha95,KY25, La96,Zh86}.

It is standard in the calculus of variations that \eqref{q-mono}
implies \eqref{r-mono}. Clearly, \eqref{q-mono} holds if $\sigma$ is \emph{strongly monotone}, that is, 
\begin{equation}\label{f-mono}
\langle \sigma(A + B) - \sigma(A), \, B \rangle
\ge \nu |B|^2
\quad \forall\, A,B \in \mathbb{R}^{m \times n}.
\end{equation}
When $m = 1$ or $n = 1$, the conditions \eqref{r-mono}--\eqref{f-mono}
are equivalent.

For smooth functions $\sigma$ satisfying \eqref{q-mono}, the results of
\cite{DG00,Fu87,Ha95}   imply that every Lipschitz
weak solution $u$ of equation \eqref{DE} enjoys \emph{partial 
$C^{1,\alpha}$ regularity}. More precisely,
\[
Du \in C^\alpha_{\mathrm{loc}}(U; \mathbb{R}^{m \times n})
\]
for some $\alpha \in (0,1)$ and some open set $U \subset \Omega$ of full
measure. In this sense, such equations are  regarded as \emph{regular}.  

When $\sigma = DF$ for a smooth function $F \colon \mathbb{R}^{m\times n} \to \mathbb{R}$, equation~\eqref{DE} arises as the Euler--Lagrange equation associated with the energy functional
\begin{equation}\label{energy}
I(u) = \int_\Omega F(Du(x)) \, dx.
\end{equation}
See, for example, \cite{AF84,Ba77,Da08,Ev86,FH85, Gi83,Mo52} for general treatments of such functionals. In particular, the results of 
\cite{Ev86,FH85} established partial $C^{1,\alpha}$ regularity for minimizers of $I(u)$ under a strong quasiconvexity assumption on $F$, which is weaker than condition~\eqref{q-mono}. See also \cite{DG00,Fu87,Ha95, Mi06, Zh92} for related regularity results.

In the celebrated works \cite{MSv03,Sz04}, Lipschitz weak solutions that are nowhere $C^1$ were constructed for equation~\eqref{DE} in the case $m=n=2$ by adapting the convex integration method \cite{Gr86} and employing the so-called \emph{$T_N$-configurations}  \cite{Ta93} for $N=4,5$. In these works, $\sigma = DF$ arises from suitably chosen quasiconvex or polyconvex functions $F$ on $\mathbb{R}^{2 \times 2}$, so that the resulting equations form irregular elliptic systems.

More recently, highly irregular very weak solutions were constructed in \cite{CT25,Jo24} for the scalar $p$-Laplace equation and two-dimensional elliptic Euler--Lagrange equations, using a different convex integration scheme based on the so-called \emph{staircase laminates}.

Most recently, building on the convex integration framework developed in \cite{KY15,KY17,KY18,Ya20,Ya22,Zh06}, the irregularity of the diffusion equation 
\begin{equation}\label{PE}
\partial_t u = \operatorname{div} \sigma(Du)
\quad \text{in } \Omega_T := \Omega \times (0,T), \quad T>0,
\end{equation}
was investigated in \cite{GKY26} under a structural assumption on the function $\sigma$, termed \emph{Condition $O_N$}, which is formulated in terms of certain geometric structures called \emph{$\mathcal{T}_N$-configurations}; see Definitions~\ref{T-N} and \ref{O-N} below. Under this assumption, the associated initial-boundary value problems for \eqref{PE} admit infinitely many Lipschitz weak solutions that are nowhere $C^1$ in $\Omega_T$, even for certain smooth initial and boundary data. This stands in sharp contrast to the partial $C^{1,\alpha}$ regularity result established in \cite{BDM13} for equation \eqref{PE} under condition \eqref{q-mono}.

In this paper, we show that the same Condition $O_N$ also guarantees the existence of Lipschitz weak solutions to equation \eqref{DE} that are nowhere $C^1$ in $\Omega$, for certain smooth boundary data. More precisely, we prove the following main result.

\begin{theorem}\label{mainthm}
Let $m,n \ge 1$, and let $\sigma\colon \R^{m\times n}\to \R^{m\times n}$ be locally Lipschitz and satisfy Condition~$O_N$ for some $N\ge 2$. Let $\Sigma(1)$ denote the open set defined in Definition~\ref{O-N}.

Assume that $\bar u \in C^1(\bar \Omega;\R^m)$ and $\bar V \in C(\bar\Omega; \R^{m\times n})$ satisfy
\[
\dv \bar V = 0 \quad \text{in } \Omega, 
\quad 
(D\bar u, \bar V) \in \Sigma(1) \quad \text{on } \bar \Omega.
\]
Here and throughout, the divergence-free condition is understood in the sense of distributions.

Then, for any $\delta \in (0,1)$, the Dirichlet problem
\begin{equation}\label{DP}
\begin{cases}
\operatorname{div} \,\sigma(Du) = 0 & \text{in } \Omega,\\[2mm]
u = \bar u & \text{on } \partial \Omega,
\end{cases}
\end{equation}
admits a Lipschitz weak solution $u$ that is nowhere $C^1$ in $\Omega$ and satisfies
\begin{equation}\label{c-0}
\|u - \bar u\|_{L^\infty(\Omega)} < \delta.
\end{equation}
\end{theorem}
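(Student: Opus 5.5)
The plan is to recast \eqref{DP} as a first-order differential inclusion and solve it by a Baire-category convex integration scheme, following \cite{GKY26}. Write $w=(u,V)$ with $u\colon\Omega\to\R^m$ and $V\colon\Omega\to\R^{m\times n}$ satisfying $\dv V=0$. A Lipschitz $u$ is then a weak solution of \eqref{DE} exactly when there is a divergence-free $V$ with
\[
(Du,V)\in K:=\{(A,\sigma(A)) : A\in\R^{m\times n}\}\quad\text{a.e. in }\Omega.
\]
The relaxation set is $\Sigma(1)$ from Definition~\ref{O-N}, whose closure should sit inside a bounded neighborhood of $K$. Condition~$O_N$ provides, at each point of $\Sigma(1)$, a $\mathcal T_N$-configuration (Definition~\ref{T-N}) whose rank-one directions lie in the wave cone of the linear operator $(u,V)\mapsto(Du,\dv V)$. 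That wave cone consists of pairs $(p\otimes a,\, B)$ with $Ba=0$, where $a\in\R^n$ is the common oscillation direction.

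First I set up the admissible class. Let $\mathcal U$ be the set of $u\in \bar u + C^1_0$ (piecewise $C^1$ is also acceptable) with $\|u-\bar u\|_{L^\infty}<\delta$, for which there exists $V$ with $\dv V=0$, $V-\bar V$ compactly supported, and $(Du,V)\in\Sigma(1)$ on $\bar\Omega$. Let $\mathcal X$ be its closure in $L^\infty$. Because $\Sigma(1)$ is bounded in the $A$-component, $\mathcal X$ is a complete metric space of uniformly Lipschitz maps with trace $\bar u$. The pair $(\bar u,\bar V)$ shows $\mathcal U\ne\emptyset$.

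The core step is building the building blocks. Fix a point $(A,B)\in\Sigma(1)$ and a small ball $Q$. I want a compactly supported perturbation $(\varphi,\psi)$ with $\dv\psi=0$ such that three things hold: $(A+D\varphi,B+\psi)$ stays in $\Sigma(1)$; $\varphi$ is small in $L^\infty$; and $(A+D\varphi,B+\psi)$ spends most of its measure near the endpoints of the $\mathcal T_N$-configuration, and hence near $K$.

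For one rank-one segment $(p\otimes a, C)$ with $Ca=0$, the $u$-part is the usual plane-wave sawtooth $p\,h(x\cdot a)$, cut off. The $V$-part needs a divergence-free field $C\,g(x\cdot a)$; since $Ca=0$ this is automatically divergence free. The cutoff, however, breaks $\dv\psi=0$. I repair this by writing $\psi$ as the divergence of an antisymmetric potential, i.e.\ $\psi_{ij}=\partial_k\Phi_{ijk}$ with $\Phi$ antisymmetric in $(j,k)$. I choose $\Phi$ as a second primitive of the oscillation times a cutoff, so that the cutoff error is $O(\varepsilon)$ and stays inside the open set $\Sigma(1)$.

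Iterating along the $\mathcal T_N$ structure, nesting laminates of finite order in the usual Székelyhidi/\cite{GKY26} manner, gives the key lemma. For every $u\in\mathcal U$ there is a $\tilde u\in\mathcal U$ that is $L^\infty$-close to $u$ and satisfies
\[
\int_\Omega \dist\big((D\tilde u,\tilde V),K\big)\,dx \le \tfrac12\int_\Omega\dist\big((Du,V),K\big)\,dx .
\]
The same construction also yields a quantitative gradient oscillation on every subball, because the endpoints of a $\mathcal T_N$-configuration have distinct $A$-components. I expect this building-block step to be the main obstacle. The difficulty is controlling the $\dv$-free cutoff errors so that they stay inside $\Sigma(1)$, and making the construction uniform on compact subsets of $\Sigma(1)$ using the local Lipschitz continuity of $\sigma$.

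Finally I apply Baire category. The map $u\mapsto Du$ from $\mathcal X$ to $L^1$ is a Baire-one function, so its points of continuity are residual. At such a point the key lemma forces $\dist((Du,V),K)=0$ a.e.; here I pass to an $L^1$ limit of the $V$'s, using weak-$*$ compactness together with the strong convergence of $Du$. This gives a Lipschitz weak solution of \eqref{DP}. To get nowhere $C^1$, I take a countable basis of balls $B_k\subset\Omega$. The oscillation part of the lemma shows that, for each $k$, the set of $u\in\mathcal X$ with $\operatorname{osc}_{B_k}Du\ge c_0$ on a set of positive measure contains a residual set. Intersecting these residual sets over all $k$ gives solutions that are $C^1$ on no open ball, while \eqref{c-0} holds by the definition of $\mathcal U$.
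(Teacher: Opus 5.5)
Your route differs from the paper's. The paper uses no Baire category. It applies the stage Theorem~\ref{thm1} repeatedly to build an explicit sequence $(u_\nu,V_\nu)$ with $(Du_\nu,V_\nu)\in\U_\nu=\Sigma^{r_{\nu+1}}(\lambda_{\nu+1})$, and gets the $W^{1,1}$-Cauchy property from the gradient estimate (g). The oscillation comes from the nested cubes $Q^q_j$ together with the second inequality in (f), which preserves the measure already lying in each $S_k$ up to the factor $\lambda/\mu$. These factors telescope in Lemma~\ref{lem512} to a lower bound on each $Q^q_j$ that does not depend on the later stage $p$. A Baire argument needs neither (g) nor this preservation property, and it yields a residual set of solutions.

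The existence half of your plan works, but not through weak-$*$ compactness of the $V$'s. Since $B\mapsto\dist((A,B),K)$ is not convex, defects do not pass to weak limits. What closes the argument is driving the defect of the approximants to zero. Your lemma can be iterated, because $\tilde u\in\mathcal U$ again; equivalently, apply Theorem~\ref{thm1} once with $\mu$ close to $1$ and use Lemma~\ref{lem52}. So every $u\in\mathcal X$ is a uniform limit of $w_k\in\mathcal U$ with $\|\sigma(Dw_k)-W_k\|_{L^1}\to0$. At a continuity point $Dw_k\to Du$ in $L^1$, hence $W_k\to\sigma(Du)$ strongly and $\dv\sigma(Du)=0$.

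The genuine gap is the nowhere-$C^1$ step. The gradient oscillation your blocks produce is not uniform over $\mathcal U$. For $Y\in S^r_i(\lambda)$ the construction sends only a fraction of order $1-\lambda/\mu$ of the measure into $S_j$, $j\ne i$ (compare Theorem~\ref{thm1}(f)), and this degenerates as $\lambda\to1$. Nothing prevents the approximants of a continuity point from being of this type; the $w_k$ above are. So a bound $|B\cap\{D\tilde u_k\text{ near }\xi_j^1\}|\ge c|B|$ with $c$ independent of $k$ is unavailable, and the oscillation need not survive the limit $D\tilde u_k\to Du$ in $L^1$.

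The missing idea is an open-dense argument. Let $\mathcal O_B=\{u\in\mathcal X:\operatorname{ess\,osc}_B Du>d_0/2\}$, with $d_0$ the separation coming from (P1)(ii).
\begin{itemize}
\item \emph{Openness.} Suppose $u_j\to u$ uniformly with $\operatorname{ess\,osc}_B Du_j\le d_0/2$. Then $Du_j|_B$ takes values in a compact convex set of diameter $\le d_0/2$. By Blaschke selection these sets converge to a compact convex set of diameter $\le d_0/2$. Such convex constraints are weak-$*$ closed and $Du_j\rightharpoonup Du$ weakly-$*$, so $\operatorname{ess\,osc}_B Du\le d_0/2$.
\item \emph{Density.} Apply Theorem~\ref{thm1} on $G=B$ with $\mu$ near $1$. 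This gives positive (not necessarily uniform) measure in every $S^s_k(\mu)$, and the first components of these sets are $d_0-O(1-\mu)$ apart.
\end{itemize}
Intersecting the sets $\mathcal O_{B_k}$ over a countable basis of balls with the continuity points gives solutions with $\omega_{Du}\ge d_0/2$ everywhere.

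Finally, a general point of $\Sigma(1)$ does not lie in $\T(X_1,\dots,X_N)$ for a single configuration. It lies on a $\Gamma$-segment $[\zeta_i(\lambda',\rho),\pi_i(\rho')]$ joining two configurations. Your block must first laminate along that segment and then split at $\rho$ and at $\rho'$ separately. The targets should be $\zeta_j(\mu,\cdot)$ with $\mu<1$, not $\xi_j\in K$; this is Theorem~\ref{lem1}.
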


We note that many non-monotone functions $\sigma$ representing forward-backward diffusion, studied in \cite{KY15,KY17,KY18,Zh06} in the scalar case ($m=1$), satisfy Condition $O_2$. The functions $\sigma = DF$ constructed in \cite{MSv03,Sz04} for $m = n = 2$, mentioned above, can be shown to satisfy Conditions $O_4$ and $O_5$, respectively. Moreover, functions of the form $\sigma = DF$, corresponding to certain polyconvex functions $F$ on $\R^{2\times n}$ constructed in \cite{GKY26} for all $n \ge 2$, also satisfy Condition $O_5$.
 Therefore, Theorem~\ref{mainthm} applies in all these cases. 

The following result follows from Theorem~\ref{mainthm} using the strongly polyconvex functions on $\mathbb{R}^{2 \times n}$ constructed in \cite[Theorem~2.2]{GKY26}. See also the related result in \cite[Corollary~2.3]{GKY26}.

\begin{theorem}\label{mainthm2}
Let $m,n \ge 2$. Then there exist (strongly polyconvex) functions  $F$ on $\R^{m\times n}$ of the form
\begin{equation}\label{poly-00}
F(A) = \frac{\nu}{2} |A|^2 + G\big(A_1, \delta(A_1)\big)
\quad \forall\, 
A = 
\begin{pmatrix}
A_1 \\
A_2
\end{pmatrix}
\in \mathbb{R}^{m \times n},
\end{equation}
where $\nu > 0$ is a constant, 
$A_1= (a_{ij})  \in \mathbb{R}^{2 \times n}$, $\delta(A_1) = a_{11} a_{22} - a_{12} a_{21},$ and $G \in C^\infty(\mathbb{R}^{2 \times n} \times \mathbb{R})$ is  convex, such that for some constants $C_k > 0$,
\begin{equation}\label{poly-0}
|D^k F(A)| \le C_k \big( |A| + 1 \big)
\quad \forall\, A \in \mathbb{R}^{m \times n}, \;\; k = 1,2,\dots,
\end{equation}
and such that the Dirichlet problem
\[
\begin{cases}
\operatorname{div} DF(Du) = 0 & \text{in } \Omega, \\
u = 0 & \text{on } \partial \Omega,
\end{cases}
\]
admits infinitely many Lipschitz weak solutions that are nowhere $C^1$ in~$\Omega$.
\end{theorem}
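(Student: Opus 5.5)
Theorem~\ref{mainthm2} will be derived from Theorem~\ref{mainthm} and the polyconvex functions of \cite[Theorem~2.2]{GKY26}. We start with the case $m=2$: that theorem gives, for each $n\ge2$, a function $F_1(A_1)=\frac{\nu}{2}|A_1|^2+G(A_1,\delta(A_1))$ on $\R^{2\times n}$ with $G$ smooth and convex and with the growth bounds \eqref{poly-0}, such that $\sigma_1=DF_1$ satisfies Condition~$O_5$ (as noted after Theorem~\ref{mainthm}). For general $m\ge2$ we set $F(A)=F_1(A_1)+\frac{\nu}{2}|A_2|^2$, which is \eqref{poly-00}, and $\sigma=DF=(\sigma_1(A_1),\nu A_2)$. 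Strong polyconvexity and \eqref{poly-0} are immediate from the corresponding properties of $F_1$, and $\sigma$ is smooth, hence locally Lipschitz.

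The first step is to show that $\sigma$ satisfies Condition~$O_5$ on $\R^{m\times n}$. We plan to take the $\T_5$-configurations of $\sigma_1$ and embed them by appending zero rows $A_2=0$ and $V_2=0$. The rank-one and wave-cone structure is preserved row by row, and the relation $V=\sigma(A)$ holds on the embedded set because $\sigma(A_1,0)=(\sigma_1(A_1),0)$. Consequently the open set $\Sigma(1)$ for $\sigma$ contains the set $\{(A_1,0;V_1,0):(A_1,V_1)\in\Sigma_1(1)\}$, after possibly thickening in the $A_2,V_2$ directions, which openness allows. If Definition~\ref{O-N} requires more, for instance openness in the full space, we will exploit the fact that the extra rows decouple.

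The second step is to construct boundary data compatible with $u=0$ on $\partial\Omega$. Theorem~\ref{mainthm} requires $(D\bar u,\bar V)\in\Sigma(1)$ on $\bar\Omega$ with $\dv\bar V=0$. We will take $\bar u=0$ and a constant $\bar V=V_0$, which is trivially divergence-free, so we need $(0,V_0)\in\Sigma(1)$. This is where we expect the main difficulty. For the special $F$ of \cite{GKY26} we expect that $(0,V_0)$ lies in the interior of the relevant $\T_5$-hull for some $V_0$, since the configurations there are built around the origin and \cite[Corollary~2.3]{GKY26} gives zero-boundary-data solutions for the parabolic problem. If $0$ is not directly admissible, we first try to translate the construction, replacing $F(A)$ by $F(A+A_0)-\langle DF(A_0),A\rangle$. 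This changes neither the weak equation (a constant is subtracted from $\sigma$) nor convexity, but it does shift $G$, and we must check that the form \eqref{poly-00} survives. The determinant term is not affine-invariant, so instead we would choose $A_0$ within the original construction of $G$ in \cite{GKY26}, adjusting the construction so that the relevant configuration is centered at the zero gradient.

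Finally, applying Theorem~\ref{mainthm} with $\bar u=0$ and a sequence $\delta_k\to0$ produces solutions $u_k$ that are nowhere $C^1$ and vanish on $\partial\Omega$. We obtain infinitely many distinct solutions as follows. Since no $u_k$ is identically zero, we can select $\delta_{k+1}<\|u_k\|_{L^\infty}$ inductively, which forces $\|u_{k+1}\|_{L^\infty}<\|u_k\|_{L^\infty}$, so the $u_k$ are pairwise distinct.
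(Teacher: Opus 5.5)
\textbf{There is a genuine gap in your first step.} Your $F$ is the paper's, and your argument for infinitely many distinct solutions is correct (each $u_k$ is nowhere $C^1$, so $\|u_k\|_{L^\infty}>0$, and you choose $\delta_{k+1}<\|u_k\|_{L^\infty}$). It also makes explicit a point the paper leaves implicit.

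The problem is the first step. For $m\ge 3$, the map $\sigma(A)=(\sigma_1(A_1);\nu A_2)$ does not satisfy Condition~$O_N$ for any $N$, and checking details cannot fix this.
\begin{itemize}
\item In Definition~\ref{O-N}, $\rho$ ranges over a ball $\bar{\B}_{r_0}$ of the full space $\R^{m\times n}\times\R^{m\times n}$.
\item By Remark~\ref{remk33}(i) and Remark~\ref{rk-3}(ii), $\rho=\pi_1(\rho)=\sum_j\nu_1^j\xi_j(\rho)$ is a convex combination of points $\xi_j(\rho)\in\mathcal K$.
\item The linear map $L(A,V)=V_2-\nu A_2$ (last $m-2$ rows) vanishes on $\mathcal K$, hence on every such combination.
\item So $L$ would vanish on the whole ball $\bar{\B}_{r_0}$, which is absurd.
\end{itemize}
For the same reason, every set $S_i^r(\lambda)$, $L^r(\lambda)$, $\Sigma(1)$ generated by your zero-padded configurations lies in the proper subspace $\{L=0\}$. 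These sets are not open, and there is nothing to ``thicken''. The extra rows carry the linear relation $V_2=\nu A_2$, whose graph is a subspace, so no configuration reaches points off it.

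Hence Theorem~\ref{mainthm} cannot be applied to the $m\times n$ system. The decoupling must be used at the level of solutions instead. Apply Theorem~\ref{mainthm} only to $\sigma_1=DF_1$ on $\R^{2\times n}$ to get $u_1$, and set $u=(u_1,0)$. Then $DF(Du)=(DF_1(Du_1);0)$, so the last $m-2$ equations read $\dv 0=0$. Thus $u$ is a Lipschitz, nowhere $C^1$ weak solution vanishing on $\partial\Omega$. This is exactly the paper's argument.

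\textbf{Your second step leaves the key hypothesis unverified.} You need a point $(0,V_0)\in\Sigma(1)$ for $\sigma_1$ with $V_0$ constant. This is supplied directly by \cite[Remark~2.1]{GKY26}: $(0,0)\in\Sigma(1)$, so $\bar u=0$, $\bar V=0$ are admissible in Theorem~\ref{mainthm}. Without this input your proposal only states an expectation, and re-engineering the construction of \cite{GKY26} is unnecessary.

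Two corrections on the translation idea:
\begin{itemize}
\item The form \eqref{poly-00} would in fact survive a shift, since $\delta(A_1+B)-\delta(A_1)$ is affine in $A_1$. Your reason for abandoning it is therefore mistaken.
\item The shift does not leave the weak equation unchanged, contrary to your claim. It replaces $\sigma$ by $\sigma(\cdot+A_0)-\sigma(A_0)$. For the original $\sigma$ this amounts to the affine boundary datum $A_0x$.
\end{itemize}
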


\begin{remark}
\leavevmode

(i) Functions  $F$ of the form   \eqref{poly-00} are \emph{strongly quasiconvex} in the sense that
\begin{equation}\label{qc}
\int_\Omega \big[ F(A + D\varphi) - F(A) \big] \, dx 
\ge 
\frac{\nu}{2} \int_\Omega |D\varphi|^2 \, dx 
\quad 
\forall\, A \in \mathbb{R}^{m \times n},\; 
\varphi \in C_0^\infty(\Omega; \mathbb{R}^m).
\end{equation}
 It is clear that \eqref{qc} holds if $\sigma = DF$ satisfies \eqref{q-mono}. 
However, Theorem~\ref{mainthm2}, in view of the regularity results of \cite{Ev86,FH85}, shows that condition \eqref{qc} is strictly weaker than \eqref{q-mono} with $\sigma = DF$, even for polyconvex functions $F$. Explicit related examples can also be found in \cite{CZ92,Ha95,KY25}.

(ii)  As shown in \cite{Zh92}, condition \eqref{qc} together with the assumption in \eqref{poly-0} for $k=3$  implies the following: for any Lipschitz weak solution $u$ of \eqref{DE}, if $Du$ is continuous at a point $x_0 \in \Omega$, then $u$ is a local minimizer of the energy $I(u)$ at $x_0$, in the sense that there exists a ball $B_r(x_0) \subset\subset \Omega$ such that
\[
\int_{B_r(x_0)} F(Du)\, dx 
\le 
\int_{B_r(x_0)} F(Du + D\psi)\, dx \quad\forall\, \psi \in C_0^\infty(B_r(x_0); \mathbb{R}^m).
\]
Therefore, the results of \cite{Ev86,FH85} show that any solution $u$ as in Theorem~\ref{mainthm2} cannot be a local minimizer of the energy $I(u)$ at any point in $\Omega$. This provides a negative answer to the question posed in \cite{Zh92}.
\end{remark}

\begin{proof}[Proof of Theorem \ref{mainthm2}] 
Let 
\[
F_1(A_1)=\frac{\nu}{2}|A_1|^2+G(A_1,\delta(A_1)) \quad \forall\, A_1=(a_{ij})\in \R^{2\times n}
\]
 be as  constructed   in \cite[Theorem 2.2]{GKY26}; in particular, $G \in C^\infty(\mathbb{R}^{2 \times n} \times \mathbb{R})$ is  convex and  the function $\sigma_1=DF_1\colon \R^{2\times n}\to\R^{2\times n}$ satisfies Condition $O_5.$ Since $(0,0)\in\Sigma(1)$ for this $\sigma_1$ (see \cite[Remark 2.1]{GKY26}), by Theorem \ref{mainthm},   the  Dirichlet  problem
\begin{equation}\label{dp}
\begin{cases}
\operatorname{div} DF_1(Du_1) = 0 & \text{in } \Omega, \\
u_1 = 0 & \text{on } \partial \Omega,
\end{cases}
\end{equation}
admits  infinitely many  Lipschitz weak solutions $u_1\colon \Omega\to \R^2$ that are nowhere $C^1$  in $\Omega.$
This proves the result when $m = 2$.

Now assume that $m\ge 3.$ Define $F\colon \R^{m\times n}\to\R$ by
\[
F(A)=\frac{\nu}{2}|A_2|^2 +F_1(A_1) \qquad \forall\,A=\begin{pmatrix}A_1\\A_2\end{pmatrix},\;\; A_1\in \R^{2\times n}. 
\]
Then $F$ satisfies \eqref{poly-0}, and moreover
\[
F(A)=\frac{\nu}{2}|A|^2 +G(A_1,\delta(A_1)), \quad DF(A)= \begin{pmatrix} DF_1(A_1)\\ \nu A_2 \end{pmatrix}.
\]

 Let $u(x)=(u_1(x), \,0) \in \R^2\times \R^{m-2}=\R^m,$ where $u_1\colon \Omega\to \R^2$ is a nowhere $C^1$ Lipschitz weak solution of   \eqref{dp}. Then $u$   is also nowhere $C^1$  in $\Omega.$ Moreover,
\[
 Du=\begin{pmatrix} Du_1\\O\end{pmatrix}, \quad DF(Du)=\begin{pmatrix} DF_1(Du_1)\\O\end{pmatrix}.
\]
Therefore,  $u$ is a Lipschitz weak solution of
\[
\begin{cases}
\operatorname{div} DF(Du) = 0 & \text{in } \Omega, \\
u = 0 & \text{on } \partial \Omega,
\end{cases}
\]
which completes the proof.
 \end{proof}

The remainder of the paper is devoted to the proof of Theorem~\ref{mainthm}.  We adapt the convex integration framework by reformulating equation~\eqref{DE} as a first-order partial differential relation:
\begin{equation}\label{pdr1}
\operatorname{div} V = 0 \;  \text{ in } \Omega, \quad (Du, V) \in \mathcal{K} \;  \text{ a.e. in } \Omega,
\end{equation}
for the unknown function \((u, V) \colon \Omega \to \mathbb{R}^m \times \mathbb{R}^{m \times n}\), where
\[
\mathcal{K} = \{(A, \sigma(A)) : A \in \mathbb{R}^{m \times n}\}
\] 
denotes the graph of the function $\sigma$.

Although this differential relation  falls within the general framework of partial differential relations studied in \cite{DM97,DM99} using Baire category methods, and in \cite{MSv99,MSy01} via convex integration, it requires special treatment due to the new Condition $O_N$ and the goal of constructing  nowhere $C^1$ solutions. Here, we employ the same sequence of uniformly bounded open sets,
\[
\{\mathcal{U}_\nu\}_{\nu=1}^\infty \subset \mathbb{R}^{m \times n} \times \mathbb{R}^{m \times n},
\]
as constructed in \cite{GKY26}, but we build a new corresponding sequence 
\[
\{(u_\nu, V_\nu)\}_{\nu=1}^\infty \subset (\bar u, \bar V) + C^\infty_0(\Omega; \mathbb{R}^m \times \mathbb{R}^{m \times n}) 
\]
such that
\begin{equation}\label{pdr2}
\begin{cases} 
\dv V_\nu = 0  \;  \text{ in } \Omega, \quad (Du_\nu, V_\nu) \in \mathcal{U}_\nu \;\text{ on } \bar \Omega, \\
\displaystyle \lim_{\nu \to \infty} \|\sigma(Du_\nu) - V_\nu\|_{L^1(\Omega)} = 0,\\
\{u_\nu\}_{\nu=1}^\infty \text{ is Cauchy in } W^{1,1}(\Omega; \mathbb{R}^m).
\end{cases}
\end{equation}

The limit $u$ of $\{u_\nu\}_{\nu=1}^\infty$ in $W^{1,1}(\Omega; \mathbb{R}^m)$ then provides  a Lipschitz weak solution to the Dirichlet problem  \eqref{DP}. 
Moreover, the construction also ensures that the essential oscillation of $Du$ at every point in $\Omega$ is uniformly bounded below by a positive constant, so  that $u$ is nowhere $C^1$ in $\Omega$.

The key steps in our convex integration strategy rely on  the new initial  building blocks  (see Lemma~\ref{lem-0}) derived from the same wave cone
\begin{equation}\label{set-G}
\Gamma = \big\{ (p \otimes a, B) : p \in \mathbb{R}^m, \; a \in \mathbb{R}^n \setminus \{0\}, \; B \in \mathbb{R}^{m \times n}, \; Ba = 0 \big\},
\end{equation}
as introduced  in \cite{GKY26}.  By combining these blocks with $\T_N$-configurations under Condition~$O_N$, we implement the convex integration scheme of \cite{GKY26} to establish the main stage theorem (Theorem~\ref{thm1}), which in turn realizes the construction of \eqref{pdr2} and ultimately completes the proof of Theorem~\ref{mainthm}.

%%%%%%%%%%%%%%%%%%%%%%%%%%%%%%%%%%%%%%%%%%%%%%%%%%%%%%%%%%%%%%

\section{Convex Integration,  $\T_N$-configurations and  Condition $O_N$}\label{s-3}

\subsection{Notation} We introduce some notation  used throughout the paper.

  Let $d, q\ge1$ be  integers.  A {\em cube}   in $\R^{d}$   always refers to an open cube whose sides are parallel to the coordinate axes.  For $y\in\R^d$ and $l>0$, we denote by $Q_{y,l}$   the  cube centered at  $y$ with side length $2l,$ and write  $l=\rad(Q_{y,l}).$

Given  {$S\subset\R^d$}  and  $\epsilon>0$,  we denote by $S_\epsilon$   the $\epsilon$-neighborhood of $S;$ i.e.,
\[
S_\epsilon=\{\xi \in \R^d : \dist(\xi,S)<\epsilon\}
=\bigcup_{\eta\in S}\{\xi \in \R^d : |\eta-\xi|<\epsilon\}.
\]
Given $\xi,\eta\in \R^q$,  the closed line segment  
connecting $\xi$ and $\eta$ is   denoted  by 
\[
[\xi,\eta]=\{\lambda \xi+(1-\lambda)\eta: \lambda\in [0,1]\}.
\]

The Lebesgue measure of a measurable set $G$ in $ \R^d$ is denoted by $|G|=\mathcal L_d(G).$

 Let  $G\subset \R^d$ be  a bounded open set, and let  $f\colon G\to \R^q$ be  a measurable function.  
For any $x_0\in G$,  the  \emph{essential oscillation}  of $f$ at $x_0$ is defined by
\[
\omega_f(x_0)=\inf\big\{\|f(x)-f(y)\|_{L^\infty(U\times U)} : U\subset G, \;  \mbox{$U$  open,} \;  x_0\in  U \big\}.
\]
We say that $f$ is \emph{essentially continuous} at $x_0$  if $\omega_f(x_0)=0.$
Clearly, continuity at a point  implies essential continuity there, though the converse need not hold.

Finally, as mentioned in Theorem \ref{mainthm}, given a function $V\in L^1_{\mathrm{loc}}(G;\R^{q\times d})$,  
the divergence-free condition
\begin{equation}\label{div-0}
\dv V=0 \; \text{ in $G$}
\end{equation}
is always understood in the sense of distributions; that is,
\[
 \int_{G}  \langle  V(x), D\varphi(x) \rangle\, dx =0\quad  \forall\, \varphi\in  C^\infty_0(G;\R^q).
\]
Of course,  when $V\in C^1_{\mathrm{loc}}(G;\R^{q\times d})$, this condition coincides with the classical pointwise requirement: 
$\dv V(x)=0 $ for all $x\in G$.

\subsection{Convex Integration Lemma} 

Let $m,n\ge 1,$ and let $\Gamma\subset  \R^{m\times n}\times \R^{m\times n}$ be  the closed wave cone defined by \eqref{set-G}.   

The following result, similar to \cite[Lemma 3.1]{GKY26}, constructs new initial building blocks from the wave cone $\Gamma$.

\begin{lemma}\label{lem-0} Let  $\gamma=(p\otimes a, B)\in  \Gamma$ and $0\le\lambda\le 1.$  
Then, for any bounded open  set $G\subset \R^{n}$ and $0<\epsilon<1,$   there exists a  function  $(\varphi,  \Psi) \in C^\infty_0(G;  \R^m\times  \R^{m\times n})$  with the following properties:
\begin{itemize}
\item[(a)]  $\dv\Psi =0$ and $(D\varphi,\Psi)\in [-\lambda\gamma, (1-\lambda)\gamma]_\epsilon$  in $G;$
\item[(b)]  $\|\varphi\|_{L^\infty(G)}  <\epsilon;$
\item[(c)] there exist two disjoint open  sets $G', G''\subset\subset G$ such that
\[
\begin{cases}
|G'| \ge (1-\epsilon) \lambda |G|, & (D\varphi,\Psi)=(1-\lambda) \gamma  \;\;  \text{\rm in $G'$;}  \\
|G''| \ge (1-\epsilon) (1-\lambda) |G|, & (D\varphi,\Psi)=-\lambda \gamma \; \;   \mbox{\rm in $G''$.}
\end{cases}
\]
\end{itemize}
\end{lemma}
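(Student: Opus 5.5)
The plan is to build $(\varphi,\Psi)$ as a cut-off simple laminate in the direction $a$. For the $\Psi$ component I will use an antisymmetric potential, so that $\dv\Psi=0$ holds exactly even after the cut-off. The cut-off errors will then be made small by taking the laminate's period small.

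\textbf{Degenerate cases.} If $\lambda=0$ or $\lambda=1$, take $(\varphi,\Psi)=0$. For $\lambda=0$ we need $(D\varphi,\Psi)=-\lambda\gamma=0$ on a set $G''$ with $|G''|\ge(1-\epsilon)|G|$, and $G'=\emptyset$ is allowed. The case $\lambda=1$ is symmetric. So assume $0<\lambda<1$ and $a\neq 0$.

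\textbf{Profile.} For a small period $\tau>0$, let $h_\tau(s)=\tau h(s/\tau)$, where $h\in C^\infty(\R)$ is $1$-periodic with mean-zero derivative, chosen as follows:
\begin{itemize}
\item $h'=1-\lambda$ on a union of intervals of total length $\ge(1-\epsilon/4)\lambda$ per period;
\item $h'=-\lambda$ on a union of intervals of total length $\ge(1-\epsilon/4)(1-\lambda)$ per period;
\item $h'\in[-\lambda,1-\lambda]$ everywhere.
\end{itemize}
This is a smoothing of the standard sawtooth, and it gives $|h_\tau|\le C\tau$.

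\textbf{Cut-off.} Choose a compact $K\subset G$ with $|G\setminus K|<\epsilon|G|/4$, and a cut-off $\chi\in C_0^\infty(G;[0,1])$ with $\chi=1$ on a neighborhood of $K$.

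\textbf{The construction.}
\begin{itemize}
\item Set $\varphi(x)=\chi(x)\,h_\tau(x\cdot a)\,p$. Then
\[
D\varphi=\chi\, h_\tau'(x\cdot a)\,p\otimes a+h_\tau\, p\otimes D\chi .
\]
\item For $\Psi$, use the condition $Ba=0$, i.e.\ $b_i\cdot a=0$ for each row $b_i$ of $B$. Define antisymmetric matrices
\[
M^i_{jk}=\frac{b_{ij}a_k-b_{ik}a_j}{|a|^2},
\]
and set $\Psi_{ij}=\sum_k\partial_k\big(\chi\, h_\tau(x\cdot a)M^i_{jk}\big)$. Antisymmetry gives $\dv\Psi=0$ identically, and $\Psi\in C_0^\infty(G)$.
\item Expanding, $\sum_k M^i_{jk}a_k=b_{ij}$ because $b_i\cdot a=0$. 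Hence
\[
\Psi=\chi\, h'_\tau(x\cdot a)B+O(|h_\tau||D\chi|).
\]
\end{itemize}

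\textbf{Verifying (a)--(c).} Combining the two expansions,
\[
(D\varphi,\Psi)=\chi\,h'_\tau(x\cdot a)\,\gamma+E,\qquad |E|\le C(p,a,B)\,\|D\chi\|_\infty\,\tau .
\]
\begin{itemize}
\item For (a): $\chi h'_\tau\in[-\lambda,1-\lambda]$, because the segment $[-\lambda\gamma,(1-\lambda)\gamma]$ contains $0$ and $\chi\in[0,1]$. Choosing $\tau$ small (after $\chi$ is fixed) makes $|E|<\epsilon$.
\item For (b): $\|\varphi\|_\infty\le C|p|\tau<\epsilon$.
\item For (c): on the neighborhood of $K$ where $\chi=1$ we have $D\chi=0$, so $E=0$ there. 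Take $G'$ (respectively $G''$) to be the interior of the part of that neighborhood where $h'_\tau(x\cdot a)=1-\lambda$ (respectively $=-\lambda$), intersected with a slightly smaller open set compactly contained in $G$. On $G'$ we get exactly $(1-\lambda)\gamma$, and on $G''$ exactly $-\lambda\gamma$.
\end{itemize}

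\textbf{Main obstacle.} The delicate step is the measure estimate in (c). As $\tau\to0$, the slabs $\{x\cdot a\in\text{plateau set}\}$ equidistribute: by a Riemann--Lebesgue type argument, $|K\cap\{h'_\tau(x\cdot a)=1-\lambda\}|\to(\text{plateau fraction})\,|K|$. This yields $|G'|\ge(1-\epsilon)\lambda|G|$ for small $\tau$, and the same argument handles $G''$.
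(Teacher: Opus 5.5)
Your proposal is correct and follows essentially the same route as the paper. Both use a cut-off periodic plateau profile $\varphi=\delta\zeta f(a\cdot x/\delta)p$ in the direction $a$, with $\Psi$ obtained from the antisymmetric potential $\delta\zeta f(a\cdot x/\delta)M^i_{jk}$; this expands to exactly the paper's $\Psi=\frac{\delta}{|a|^2}\big[(a\cdot Dh)B-(BDh)\otimes a\big]$, and both keep the cut-off errors at $O(\delta)$. The paper also gets the measure bound in (c) from the same equidistribution you invoke, proved explicitly by rotating coordinates, applying Fubini, and using weak* convergence of the rescaled periodic plateau indicators.
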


\begin{proof}   If $\lambda=0$ or $\lambda=1$, the result follows by taking  $(\varphi,  \Psi)\equiv (0,0)$ and setting  $G'=\emptyset$ or $G''=\emptyset,$ respectively.
Hence, we assume $0<\lambda<1.$

Let $\zeta\in C^\infty_0(G)$ be such that
\[
\mbox{$0\le \zeta\le 1$ in $G,$  \; \;  $\zeta|_{\tilde G}=1$,  \; where $\tilde G\subset\subset G$ is open, and  $|\tilde G|\ge (1- \epsilon)^{1/3}|G|,$ }
\]
and let $I_1$ and $I_2$ be disjoint closed intervals   in $(0,1)$ such that
\[
  |I_1| = (1-\epsilon)^{1/3} \lambda, \quad
 |I_2| = (1-\epsilon)^{1/3} (1-\lambda).
\]

First, select a function $q\in C^\infty_0(0,1)$ satisfying 
\[
\begin{cases}
 -\lambda \le q \le 1-\lambda \;  \text{ in $ (0,1)$},\\
   \{t\in (0,1):q(t)=1-\lambda\}=I_1,\\
  \{t\in (0,1):q(t)=-\lambda\}=I_2,
 \\
     \int_0^1 q(t)dt =0, \end{cases}
\]
 extend $q$ to   $\R$ as a 1-periodic function, and define
  \[
  f(t)=\int_0^t q(s)\,ds\qquad   \forall\, t\in \R.
  \]
 Note that $f\in C^\infty(\R)$ is 1-periodic,  and  $f'(t)=q(t)$ for all $t\in \R.$ 
 
Next, let $\delta\in (0,1)$ be a parameter  to be chosen later. Define function
\[
h(x) = \zeta(x)f\big (\frac{a\cdot x}{\delta}\big ), 
\]
and set
\[
 \varphi(x)= \delta h(x)p,  \quad  \Psi (x)= \frac{\delta}{ |a|^2} \Big [ (a\cdot Dh(x))B-(BDh(x))\otimes a \Big ].
       \]
 It is easily verified that $(\varphi,  \Psi) \in C^\infty_0(G;  \R^m\times  \R^{m\times n})$, and
\[
 \begin{cases}
 \dv\Psi=0,\\
D \varphi= \zeta(x) f' \big (\frac{a\cdot x}{\delta}\big )p\otimes a+\delta f \big (\frac{a\cdot x}{\delta}\big )p\otimes D\zeta(x),\\
 \Psi=\zeta(x) f' \big (\frac{a\cdot x}{\delta}\big )B +\frac{\delta}{ |a|^2} f\big (\frac{a\cdot x}{\delta}\big ) 
   \big [ (a\cdot D\zeta(x))B -  (BD\zeta(x))\otimes a\big ].
 \end{cases}
\]
Hence  
\[
 \varphi =O(\delta), \quad  (D\varphi,\Psi) =f' \zeta \gamma +O(\delta),  
   \]
 where each $O(\delta)$ term satisfies  $|O(\delta)|\le C\delta$ for  some constant $C$ independent of $\delta\in (0,1).$ 
 
 Since $f'\zeta \gamma \in [-\lambda\gamma,(1-\lambda)\gamma]$ in $G$, it follows that, for all sufficiently small $\delta>0,$
\[
 \|\varphi\|_{L^\infty(G)}  <\epsilon, \quad   
 (D\varphi,\Psi) \in [-\lambda\gamma,(1-\lambda)\gamma]_\epsilon \;\;\mbox{ in $G.$} 
 \]

Let $I_k^o$ be the interior of interval $I_k$ for $k=1,2,$ and let $\chi_k=\chi_{\tilde I_k}$  be the characteristic function  of the open set 
\[
\tilde I_k=\bigcup_{j\in \{0,\pm 1,\pm 2,\dots\}}  (j+ I_k^o). 
\] 
Define the open sets
  \[
  \tilde G_k^\delta= \Big \{x\in \tilde G: \,  \frac{\alpha\cdot x}{\delta}\in \tilde I_k \Big\}= \Big \{x\in \tilde G: \, \chi_k ( \frac{\alpha\cdot x}{\delta}) =1 \Big\},  \quad k=1,2.
  \] 
Then
\[
 (D\varphi,\Psi)|_{\tilde G_1^\delta} =f'\gamma  =(1-\lambda)\gamma, \quad
  (D\varphi,\Psi)|_{\tilde G_2^\delta} =f'\gamma =-\lambda \gamma.
  \]
  
Let $\{\alpha_i\}_{i=1}^{n}$, with $\alpha_1= \frac{\alpha}{|\alpha|}$, be an orthonormal basis of $\R^{n}.$ By the change of variables  $ x=\sum_{i=1}^{n} z_i\alpha_i$  and Fubini's  theorem, for each $k=1,2,$ we have, with $\delta'=\delta/|\alpha|,$  
 \[
\begin{split}  |\tilde G_k^\delta|&=\int_{\tilde G}  \chi_k( \frac{\alpha\cdot x}{\delta})\,dx=\int_{\R^{n}} \chi_{\tilde G}(x) \chi_k( \frac{\alpha_1\cdot x}{\delta'})\,dx\\
&  =\int_{\R^{n}} \chi_{\tilde G'}(z) \chi_k( \frac{z_1}{\delta'})\,dz   =\int_{\R} \Big ( \int_{\R^{n-1}}\chi_{G'(z_1)}(z') \chi_k( \frac{z_1}{\delta'})\,dz'\Big) \,dz_1\\
& =\int_{\R} |G'(z_1)|\chi_k(\frac{z_1}{\delta'})\,dz_1,\end{split}
  \]
where the sets $\tilde G'$ and $G'(z_1)$ are defined by
\[
\tilde G'=\{z\in\R^{n} : y=\sum_{i=1}^{n} z_i\alpha_i\in \tilde G\}, \quad  G'(z_1)= \{z' \in\R^{n-1}: (z_1,z')\in  \tilde G'  \}.
\]

Since $\chi_k$ is 1-periodic for each $k=1,2,$   the sequence $\{\chi_k(\ell t)\}_{\ell=1}^\infty$ converges weakly*  in $L^\infty(\R)$ to the  constant  $C_k=\int_0^1 \chi_k(t)\,dt=|I_k|$.Setting $\delta = \delta_\ell = \frac{|\alpha|}{\ell}$ for $\ell = 1,2,\dots$, we obtain, for each $k=1,2$, the limit
\[
|\tilde G_k^{\delta_\ell}| 
= \int_{\R} |G'(z_1)| \, \chi_k(\ell z_1)\, dz_1 \to  |I_k| \int_{\R} |G'(z_1)|\, dz_1
= |I_k| \, |\tilde G'| = |I_k| \, |\tilde G|,
\quad \text{as } \ell \to \infty.
\]

Since $|I_1|=(1-\epsilon)^{1/3} \lambda, \, |I_2|=(1-\epsilon)^{1/3} (1-\lambda)$, and $|\tilde G|\ge (1-\epsilon)^{1/3}|G|,$ we have
 \[
  |I_1||\tilde G| >(1-\epsilon)  \lambda |G|, \quad
 |I_2| |\tilde G| >(1-\epsilon) (1-\lambda)|G|.
 \]
 Consequently, requirements (a)–(c) are satisfied by choosing
$\delta=\delta_\ell>0$ with $\ell$ sufficiently large and taking the open sets
\[
G'=\tilde G_1^{\delta}, \quad
G''=\tilde G_2^{\delta}.
\]

This completes the proof.
\end{proof}

\subsection{$\T_N$-Configurations and Condition $O_N$} Let $m,n \ge 1$, let $\Gamma$ be defined as in \eqref{set-G}, and assume that $N \ge 2$.

We begin by recalling the definition of $\T_N$-configurations from \cite{GKY26}.

\begin{definition} \label{T-N} 
Let $\xi_1,\dots,\xi_N\in\R^{m\times n}\times \R^{m\times n}.$ The  $N$-tuple 
$
(\xi_1,\dots ,\xi_N)
$
 is called a  {\em  $\T_N$-configuration}   if  there exist 
 \[
 \rho\in \R^{m\times n}\times \R^{m\times n}, \quad \gamma_1,\dots,\gamma_N\in \Gamma, \quad \kappa_1,\dots,\kappa_N>1
 \]
  such that $
 \gamma_1+\gamma_2+\dots+\gamma_N=0$ and
\begin{equation}\label{t-N}
\xi_1=\rho+\kappa_1\gamma_1,  \quad  \xi_i=\rho+\gamma_1+\dots+\gamma_{i-1}+\kappa_i \gamma_i  \quad \forall\, 2 \le i\le N, 
\end{equation}

In this case, define  
\[
\pi_1=\rho, \quad \pi_i=\rho+\gamma_1+\dots  +\gamma_{i-1}  \quad \forall\,  2\le i\le   N, 
\]
 and set
\[
\T(\xi_1,\dots,\xi_N)=\bigcup_{j=1}^N [\xi_j,\pi_j]. 
\]
\end{definition}

\begin{remark}\label{rk-3} 
\leavevmode

(i) Let $\chi_i=1/\kappa_i\in (0,1)$ for $1\le i\le N,$  and  $\pi_{N+1}=\pi_1=\rho.$ Then
\[
 \pi_{k+1}=  \chi_k \xi_k+(1-\chi_k)\pi_k
 \quad \forall\, 1\le k \le N.
\]
We extend the index $k$ in all quantities periodically modulo $N$.

(ii) In general, let $P_k,X_k\in\R^d$ and  $ t_k\in (0,1)$ be periodic in  $k$ modulo  $N$ and satisfy  
\[
 P_{k+1}=t_kX_k+(1-t_k)P_k\quad  \forall\;k \; \mod N.
\]
 Then, a simple cyclic computation shows that 
\[
 P_i=\sum_{j=1}^N \nu_{i}^{j}X_{j} \quad  \forall\;i \; \mod N,
 \]
 where the coefficients $\nu_i^j$  (indices taken modulo $N$) are given  by
\begin{equation}\label{co-eff-0}
 \begin{split}    \nu_{i}^{i-1} & =\frac{t_{i-1}}{1-(1-t_1)\cdots(1-t_N)},\\
  \nu_{i}^{j} &  =\frac{(1-t_{i-1})(1-t_{i-2})\cdots (1-t_{j+1})t_{j}}{1-(1-t_1)\cdots(1-t_N)}\quad  \forall\, i-N\le j\le i-2. \end{split}
  \end{equation}
  
Note that the coefficients $\nu_i^j$ satisfy
\begin{equation}\label{co-eff-00}
\sum_{j=1}^N \nu_{i}^{j}=1 \quad  \forall\;i \; \mod N.
  \end{equation}
\end{remark}

\medskip

The following key result, similar to \cite[Theorem~3.3]{GKY26},  follows from Lemma~\ref{lem-0}.

\begin{theorem} \label{convex-block-thm} Let   $(\xi_1,\xi_2,\dots  ,\xi_N)$ be a $\T_N$-configuration and 
\[
\eta= \lambda \xi_i +(1-\lambda) \pi_i,
\]
where $1\le i\le N$ and $0\le \lambda\le 1.$   
 Then $  \eta=\sum_{j=1}^N  \nu_j \xi_j,$    where 
   \[
   \nu_i= \lambda +  (1-\lambda) \nu_{i}^i, \quad \nu_j=(1-\lambda)  \nu_{i}^j \quad (j\ne i),
   \] 
with $\{\nu_{i}^j\}_{j=1}^N$  given by   \eqref{co-eff-0} and  $t_k=\chi_k$ for all $1\le k\le N$.

Moreover, for any bounded open set $G\subset \R^{n}$  and   $0<\delta<1,$  there exists  a  function  $(\varphi,  \Psi) \in C^\infty_0(G; \R^m\times  \R^{m\times n})$   satisfying the following properties:
\begin{itemize}
\item[(a)] $\dv\Psi=0$ and $\eta+(D\varphi,  \Psi)\in  [\T(\xi_1,\dots,\xi_N)]_\delta$ in $G;$
\item[(b)]  $\|\varphi\|_{L^\infty(G)} <\delta;$
\item[(c)] there exist pairwise disjoint open  sets $G_1,\dots  ,G_N\subset\subset G$ such that
\[
 \eta+ (D\varphi,  \Psi)=\xi_j \; \text{ in } \, G_j, \quad  |G_j|\ge (1-\delta) \nu_j |G|  \quad \forall\, 1\le j\le N;
\]
in particular, $|\cup_{j=1}^N G_j|=\sum_{j=1}^N  |G_j|\ge(1-\delta)|G|.$
\end{itemize}
 \end{theorem}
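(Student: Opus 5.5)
The plan follows the standard $\T_N$ argument of \cite[Theorem~3.3]{GKY26}: build $(\varphi,\Psi)$ by iterating Lemma~\ref{lem-0} around the cycle of points $\pi_1,\dots,\pi_N$, using at each stage only the rank-one-type splittings $\pi_k=\chi_k\xi_k+(1-\chi_k)\pi_k$ versus $\pi_{k+1}$.

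\textbf{The algebraic identity.} By Remark~\ref{rk-3}(ii) with $P_k=\pi_k$, $X_k=\xi_k$, $t_k=\chi_k$, we have $\pi_i=\sum_j\nu_i^j\xi_j$. Hence $\eta=\lambda\xi_i+(1-\lambda)\sum_j\nu_i^j\xi_j$, which gives the stated $\nu_j$. By \eqref{co-eff-00} these coefficients sum to $1$.

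\textbf{Elementary moves.} There are two of them.

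\emph{(A)} At $\eta=\lambda\xi_i+(1-\lambda)\pi_i$, note that $\xi_i-\pi_i=\kappa_i\gamma_i\in\Gamma$. Apply Lemma~\ref{lem-0} with $\gamma=\kappa_i\gamma_i$ and the weight $\lambda$. On $G'$ this gives $\eta+(D\varphi,\Psi)=\xi_i$; on $G''$ it gives $\pi_i$. Everywhere the values stay in the $\epsilon$-neighborhood of $[\pi_i,\xi_i]\subset\T$.

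\emph{(B)} At a point $\pi_k$, write $\pi_k=\chi_k\xi_k+(1-\chi_k)\pi_{k+1}$. Here $\xi_k-\pi_{k+1}=(\kappa_k-1)\gamma_k\in\Gamma$. Apply Lemma~\ref{lem-0} again, reaching $\xi_k$ on a fraction $\approx\chi_k$ and $\pi_{k+1}$ on a fraction $\approx1-\chi_k$. The values now stay near $[\pi_{k+1},\xi_k]$. This segment is contained in $[\pi_k,\xi_k]$, because $\pi_k$ lies in $[\pi_{k+1},\xi_k]$ as a convex combination and the points $\pi_k,\xi_k,\pi_{k+1}$ are collinear.

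Since $\Gamma$ is a cone, the rescaled directions are legitimate.

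\textbf{Iteration.} Starting with move (A) on $G$, I would then apply (B) repeatedly on the open sets where the current value equals $\pi_k$. Each such set is an open set $G''$ from the previous step, and on it I take cutoffs in $C^\infty_0(G'')$. Define $\varphi$ and $\Psi$ as the sums of all the pieces. Pieces supported in different disjoint sets do not interact, and later pieces are supported inside sets where the earlier ones are constant. Hence $\dv\Psi=0$, and the value stays in $[\T]_\delta$ once the $\epsilon$'s are chosen small. For the sup bound, the $L^\infty$ norms add but each piece lives on a disjoint region at its own level. So with stage-$s$ tolerance $\epsilon_s$ we get $\|\varphi\|_\infty\le\sum_s\epsilon_s<\delta$.

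After $M$ full cycles, the portion of $G$ not yet assigned to some $\xi_j$ has measure at most roughly $(1-\lambda)\prod(1-\chi_k)^M|G|$, up to the losses $(1-\epsilon_s)$. This tends to $0$ geometrically. The measure accumulated in $G_j$, meaning the union of all $G'$ sets where the value is $\xi_j$, converges to $(1-\lambda)\nu_i^j|G|$ plus $\lambda|G|$ when $j=i$. This is exactly the geometric series behind \eqref{co-eff-0}. Stopping after finitely many cycles gives a smooth compactly supported function. On the remaining small set the value is near some $\pi_k$, which is harmless since we only need values in $[\T]_\delta$.

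\textbf{Main obstacle.} The main obstacle is the quantitative bookkeeping. We need to choose the number of cycles $M$ and the tolerances $\epsilon_s$ so that every $|G_j|\ge(1-\delta)\nu_j|G|$. The products of factors $(1-\epsilon_s)$ must stay above $(1-\delta)^{1/2}$, and truncating the geometric series must cost at most a factor $(1-\delta)^{1/2}$ in each $\nu_j$. There is one further point: a set $G_j$ produced at stage $s$ must not be modified at later stages. This holds because later cutoffs are supported in the $G''$ sets, which are disjoint from the $G'$ sets. Finally, $G_j$ is the union of finitely many open sets, all compactly contained in $G$.
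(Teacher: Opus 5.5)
Your overall architecture matches the paper's proof: move (A), then iterating Lemma~\ref{lem-0} around the cycle of the $\pi_k$'s, truncating after finitely many cycles, and choosing the cycle count and tolerances so that truncation and the $(1-\epsilon)$-losses each cost at most a factor $(1-\delta)^{1/2}$. However, move (B) is false as stated, and everything after move (A) rests on it. Since $\pi_{k+1}=\pi_k+\gamma_k$ and $\xi_k=\pi_k+\kappa_k\gamma_k$ with $\kappa_k>1$, the three points lie on the line $t\mapsto\pi_k+t\gamma_k$ at $t=0,1,\kappa_k$, in the order $\pi_k,\pi_{k+1},\xi_k$. Hence
\[
\chi_k\xi_k+(1-\chi_k)\pi_{k+1}=\pi_k+(2-\chi_k)\gamma_k\neq\pi_k
\]
unless $\gamma_k=0$, and $\pi_k\notin[\pi_{k+1},\xi_k]$. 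It is $\pi_{k+1}$ that lies in $[\pi_k,\xi_k]$, and that is the correct reason why $[\pi_{k+1},\xi_k]\subset[\pi_k,\xi_k]$.

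Lemma~\ref{lem-0} only moves a value $\eta$ along a segment containing $\eta$, so it cannot be applied here. In fact no construction can perform this splitting. A compactly supported $(D\varphi,\Psi)$ with $\dv\Psi=0$ has zero mean, since $\int\Psi_{ij}\,dx=-\int x_j\sum_l\partial_l\Psi_{il}\,dx=0$. So the average of $\pi_k+(D\varphi,\Psi)$ over its support set is $\pi_k$. That point has distance $|\gamma_k|$ from the convex set $[\pi_{k+1},\xi_k]$, so the values cannot all lie in $[\pi_{k+1},\xi_k]_\epsilon$ for small $\epsilon$. Your move (B) also contradicts your own algebraic step: applying Remark~\ref{rk-3}(ii) requires $P_{k+1}=t_kX_k+(1-t_k)P_k$, i.e.\ $\pi_{k+1}=\chi_k\xi_k+(1-\chi_k)\pi_k$.

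The fix is to traverse the cycle backward, as the paper does. On a set where the value is $\pi_{k+1}$, apply Lemma~\ref{lem-0} with $\gamma=\xi_k-\pi_k=\kappa_k\gamma_k\in\Gamma$ and weight $\chi_k$. This produces $\xi_k$ on a fraction $\approx\chi_k$ and $\pi_k$ on a fraction $\approx1-\chi_k$, with values near $[\pi_k,\xi_k]\subset\T(\xi_1,\dots,\xi_N)$. Starting from $\pi_i$ after move (A), the chain is $\pi_i\to(\xi_{i-1},\pi_{i-1})\to(\xi_{i-2},\pi_{i-2})\to\cdots$; for $i=1$ it is $\pi_1=\pi_{N+1}\to(\xi_N,\pi_N)\to\cdots$.

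This backward ordering is also what reproduces \eqref{co-eff-0}. Set $\tau=\prod_k(1-\chi_k)$. The total share reaching $\xi_{i-1}$ is $\chi_{i-1}(1+\tau+\tau^2+\cdots)=\chi_{i-1}/(1-\tau)=\nu_i^{i-1}$. Your forward ordering would instead give $\xi_i$ the share $\chi_i/(1-\tau)$, which is not $\nu_i^i$ in general. So your claim that your geometric series is ``exactly'' \eqref{co-eff-0} is also incorrect.

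With the direction reversed, the rest of your bookkeeping goes through as in the paper: nested supports, $\dv\Psi=0$, the $L^\infty$ bound by a sum of tolerances, and pairwise disjointness of the $G_j$.
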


\begin{proof}
  If $\lambda=1$, then $ \eta=\xi_i=  \sum_{j=1}^N  \nu_j \xi_j,$ where $\nu_i=1$ and $\nu_j=0$ for all $j\ne i.$ Moreover, (a)-(c) hold with 
  $(\varphi,  \Psi)\equiv (0,0),$ for some $G_i\subset\subset G$  and $G_j=\emptyset$ for all $j\ne i$ in (c).

Now  assume $0\le \lambda<1.$
 Without loss of generality,  we assume $i=1;$ namely, $\eta= \lambda \xi_1 +(1-\lambda) \pi_1.$

 Since $\pi_{k+1}=\chi_k\xi_k+(1-\chi_k)\pi_k$ for all $k \,\mod N$, by Remark \ref{rk-3},  we have  $\pi_1= \sum_{j=1}^N  \nu_{1}^j \xi_j$ and  thus
\[
 \eta= ( \lambda +  (1-\lambda) \nu_{1}^1  )\xi_1+ \sum_{j=2}^N  (1-\lambda)  \nu_{1}^j \xi_j =\sum_{j=1}^N \nu_j \xi_j,
 \]
where $\nu_1^j$ is defined by (\ref{co-eff-0}) with $t_k=\chi_k.$
Note that, with  $\tau=(1-\chi_1)\dots(1-\chi_N),$
\begin{equation}\label{co-eff-1}
\chi_N=\nu_1^{N}(1-\tau), \;\;  \chi_{N-j+1}(1-\chi_{N-j+2})\cdots   (1-\chi_N)  =\nu_{1}^{N-j+1}(1-\tau)\;\; (2\le j\le N).
\end{equation}

Let $G\subset \R^{n}$ be a bounded open set  and   $0<\delta<1$.  Assume that $\epsilon \in (0,\delta)$ is a number to be chosen later.

  Since $\gamma= \xi_1-\pi_1  = \kappa_1\gamma_1\in\Gamma,$ we have that $\eta+(1-\lambda) \gamma=\xi_1$ and $\eta-\lambda \gamma=\pi_1.$
Thus, applying Lemma \ref{lem-0} with  $0\le \lambda<1$,  $\gamma \in\Gamma$,  $G\subset \R^{n}$ and $\epsilon \in (0,\delta)$,  we obtain  a smooth function  $(\varphi_0,\Psi_0)$ compactly supported in  $G_0=G$, satisfying (a)--(c) of the lemma.  Hence
\begin{equation}\label{start-0}
\begin{cases}
\mbox{$\dv\Psi_0=0$ and $\eta+(D\varphi_0,\Psi_0) \in [\xi_1,\pi_1]_{\epsilon}$ in $G_0;$}\\
\|\varphi_0\|_{L^\infty(G_0)} <\epsilon;\\
 \eta+(D\varphi_0,\Psi_0) =\eta+(1-\lambda) \gamma=\xi_1\; \mbox{in $G'_0;$}\\
 \eta+(D\varphi_0,\Psi_0) =\eta-\lambda \gamma=\pi_1\; \mbox{in $G_0'',$}
\end{cases}
\end{equation}
 where $G_0',G_0''\subset\subset G_0$  are pairwise disjoint open sets, with
\begin{equation}\label{start-01}
  |G_0'|\ge (1-\epsilon) \lambda |G_0|, \quad   |G_0''|\ge (1-\epsilon)(1-\lambda) |G_0|.
\end{equation}

 Let  $\tilde G_1=G_{11}=G_{0}''$. Note that 
 \[
 \eta+(D\varphi_0,\Psi_0) =\pi_1=\pi_{N+1}={\chi_N}\xi_N+(1-{\chi_N})\pi_N.
 \]
  Applying Lemma \ref{lem-0}    with 
  \[
  \lambda= \chi_N\in (0,1), \quad \gamma=\xi_N-\pi_N=\kappa_N\gamma_N\in\Gamma, \quad G=G_{11}, \quad 
  \epsilon\in(0,\delta),
  \]
    we obtain a smooth  function $(\varphi_{11},\Psi_{11})$ compactly supported in $ G_{11}$ and satisfying (a)--(c) of the lemma. Hence,
  $\pi_1+(D\varphi_{11},\Psi_{11})  \in [\xi_N,\pi_N]_{\epsilon}$ in $ G_{11}$, and
  \[
 \mbox{$\pi_1+(D\varphi_{11},\Psi_{11})  =\xi_N$ \, in $ G_{11}'$, \; \;  $\pi_1+(D\varphi_{11},\Psi_{11})=\pi_N$ \, in $ G_{11}'',$}
  \]
   where  $G_{11}',G_{11}''\subset\subset G_{11}$ are disjoint  open sets  with
\[
| G_{11}'|\ge (1-\epsilon) {\chi_N} | G_{11}|,  \quad   | G_{11}''|\ge (1-\epsilon){(1-\chi_N)} | G_{11}|.
\]

Let $ G_{12}= G_{11}'',$ in which $\pi_1+(D\varphi_{11},\Psi_{11})=\pi_N={\chi_{N-1}}\xi_{N-1}+(1-{\chi_{N-1}})\pi_{N-1}.$ Thus,  repeating the above procedure, we obtain  a smooth  function  $(\varphi_{12},\Psi_{12})$ compactly supported in $G_{12}$ such that
\[
\begin{cases} \pi_N+(D\varphi_{12},\Psi_{12}) \in [\xi_{N-1},\pi_{N-1}]_{\epsilon} \;\; \text{ in $G_{12}$,}\\
\pi_N+(D\varphi_{12},\Psi_{12}) =\xi_{N-1} \;\; \text{in $G_{12}'$,} \quad 
\pi_N+(D\varphi_{12},\Psi_{12}) =\pi_{N-1} \;\; \text{in $G_{12}'',$}\end{cases}
 \]
  where $G_{12}',G_{12}''\subset\subset G_{12}$ are disjoint open sets  with
\[
\begin{split} & |G_{12}'|  \ge (1-\epsilon) {\chi_{N-1}} | G_{12}| \ge (1-\epsilon)^2  {\chi_{N-1}} (1-\chi_N) | G_{11}|,
\\
&| G_{12}''|\ge (1-\epsilon)(1- \chi_{N-1}) | G_{12}|\ge (1-\epsilon)^2  (1-\chi_{N-1})(1-  \chi_N ) | G_{11}|.\end{split}
\]

Continuing  this procedure $N$ times, we  obtain smooth functions  $(\varphi_{1j},\Psi_{1j})$  compactly supported in $ G_{1j}= G_{1(j-1)}''$ for $1\le j\le N$, where $G_{10}''=G_0''$,  such that
\begin{equation}\label{scheme-1j}
\begin{cases}
\mbox{$\dv\Psi_{1j}=0$ and $\pi_{N-j+2}+(D\varphi_{1j},\Psi_{1j}) \in [\xi_{N-j+1},\pi_{N-j+1}]_{\epsilon}$ in $G_{1j};$}\\
\| \varphi_{1j}\|_{L^\infty( G_{1j})} <\epsilon;\\
\mbox{$\pi_{N-j+2}+(D\varphi_{1j},\Psi_{1j}) =\xi_{N-j+1}$ in $G_{1j}'$;}\\
\mbox{$\pi_{N-j+2}+(D\varphi_{1j},\Psi_{1j})=\pi_{N-j+1}$ in $G_{1j}'' $,}
 \end{cases}
\end{equation}
where  $G_{1j}',G_{1j}''\subset\subset G_{1j}$ are disjoint  open sets  with
\begin{equation}\label{sm-0}
\begin{split} & |G_{1j}'|  \ge  (1-\epsilon)^{j}  \chi_{N-j+1}(1-\chi_{N-j+2})\cdots   (1-\chi_N)  | G_{11}|,
\\
&| G_{1j}''|\ge  (1-\epsilon)^{j} (1-\chi_{N-j+1})\cdots   (1-\chi_N)  | G_{11}|.
\end{split}
\end{equation}

Define   $(\varphi_1,\Psi_1)\in C^\infty_0(\tilde G_1; \R^m\times  \R^{m\times n})$  by setting
\[
 \varphi_1=\varphi_{11}+\dots  +\varphi_{1N}, \quad \Psi_1=\Psi_{11}+\dots  +\Psi_{1N}.
\]
Then  
\begin{equation}\label{scheme-0}
\begin{cases}
 \mbox{$\dv \Psi_1=0$ and  $\pi_1+(D \varphi_1,\Psi_1)\in [\T(\xi_1,\dots  ,\xi_N)]_{\epsilon}$ in $\tilde G_1;$} \\
 \mbox{$\| \varphi_1\|_{L^\infty(\tilde G_1)} <N\epsilon;$}\\
\mbox{$\pi_1+(D \varphi_1,\Psi_1) = \xi_{N-j+1}$ in $G_{1j}' \quad\forall\, 1\le j\le N;$}\\
 \mbox{$(D \varphi_1,\Psi_1) =0$ in $G_{1N}'',$}
 \end{cases}
\end{equation}
where, by (\ref{co-eff-1}) and (\ref{sm-0}),  $G_{11}',\dots  , G_{1N}',G_{1N}''\subset\subset\tilde G_1$ are  pairwise disjoint open and \begin{equation}\label{meas-G-j}
\begin{cases} | G'_{1j}|\ge (1-\epsilon)^j  (1-\tau)\nu_{1}^{N-j+1}  |\tilde G_1| \quad  (1\le j\le N), \\
 | G_{1N}''|\ge  (1-\epsilon)^{N} \tau |\tilde G_1|.
 \end{cases}
\end{equation}

We now iterate  the  scheme (\ref{scheme-0})   inductively to obtain  open sets
\[
 \tilde G_{k}=  G_{ (k-1)N}'', \quad G'_{k1},\dots, G'_{ kN}, \; G_{kN}''\subset\subset\tilde G_{k} \;\; \text{all disjoint,}
\]
 and smooth  functions  $(\varphi_{k}, \Psi_{k})$ compactly supported in  $\tilde G_{k},$ for all $k=1,2,\dots,$ such that
 \begin{equation}\label{scheme-1}
\begin{cases}
\mbox{$\dv \Psi_{k} =0$ and $\pi_1+(D \varphi_{k},\Psi_{k})\in [\T(\xi_1,\dots  ,\xi_N)]_{\epsilon}$ in $\tilde G_{k};$}\\
\| \varphi_k\|_{L^\infty(\tilde G_{k})} <N\epsilon;\\
\mbox{$\pi_1+(D \varphi_{k},\Psi_{k})= \xi_{N-j+1}$ in $G_{kj}'$, and} \\
 |G'_{kj}|\ge (1-\epsilon)^j  (1-\tau)\nu_{1}^{N-j+1}  | \tilde G_{k}|
\quad \forall j=1,\dots  ,N;\\
 \mbox{$(D \varphi_{k},\Psi_{k})=0$ in $G_{kN}''$, \; $|G_{kN}''|\ge (1-\epsilon)^{N} \tau | \tilde G_k|.$}
\end{cases}
\end{equation}

Note that, for $k=1,2,\dots ,$
\[
 |\tilde G_k|  =|G_{(k-1)N}''|\ge (1-\epsilon )^{N} \tau |\tilde G_{k-1}|\ge\dots  \ge  (1-\epsilon)^{(k-1)N} \tau^{k-1} |\tilde G_1|.
\]
Hence,    for all $k=1,2,\dots  $ and $1\le j\le N,$
\begin{equation}\label{meas-G}
 |G'_{kj}|   \ge (1-\epsilon)^j  (1-\tau)\nu_{1}^{N-j+1}  | \tilde G_{k}|
   \ge (1-\epsilon)^{j+(k-1)N} (1-\tau)\nu_{1}^{N-j+1}   \tau^{k-1}  |\tilde G_1|.
      \end{equation}

 Let  $\ell\ge 2$  be an integer to be  chosen later.  Define
\[
(\tilde\varphi,\tilde\Psi) =\sum_{k=1}^\ell ( \varphi_{k}, \Psi_{k}), \quad V_j = \cup_{k=1}^\ell G'_{ kj}  \quad \forall\,1\le j\le N.
\]
Then $(\tilde\varphi,\tilde\Psi)$ is smooth and compactly supported in $\tilde G_1$ and satisfies that
\[
\begin{cases}
\mbox{$\dv\tilde\Psi=0$ and  $\pi_1+(D\tilde \varphi, \tilde \Psi)\in [\T(\xi_1,\dots  ,\xi_N)]_{\epsilon}$ in $\tilde G_{1}$;}
\\
\|\tilde\varphi \|_{L^\infty(\tilde G_{1})} <N \ell \epsilon;  \\
  \mbox{$\pi_1 +  (D \tilde  \varphi, \tilde \Psi )= \xi_{N-j+1}$ in $V_j$} \quad \forall\, 1\le j\le N, 
    \end{cases}
  \]
where, by (\ref{start-01}) and (\ref{meas-G}), for each $ 1\le j\le N,$
\begin{equation}\label{scheme-3}
\begin{split}  |V_j|    &= \sum_{k=1}^\ell  | G'_{ kj} |   \ge \sum_{k=1}^\ell (1-\epsilon)^{j+(k-1)N} (1-\tau)\nu_{1}^{N-j+1}   \tau^{k-1}  |\tilde G_1|
 \\
 & \ge (1-\epsilon)^{ \ell N}  (1-\tau)\nu_{1}^{N-j+1}  \Big (\sum_{k=1}^\ell   \tau^{k-1} \Big)  |\tilde G_1|   \\
 &= (1-\epsilon)^{ \ell N} (1-\tau^{\ell})\nu_{1}^{N-j+1} |\tilde G_1| \\ &\ge  (1-\epsilon)^{1+ \ell N} (1-\tau^{\ell})\nu_{1}^{N-j+1} (1-\lambda) | G|.
  \end{split}
\end{equation}

We  select  a  sufficiently large $\ell\ge 2$ such that
\begin{equation}\label{lg-1}
1-\tau^{\ell }\ge (1-\delta)^{1/2},
\end{equation}
and  a sufficiently small $ \epsilon \in (0,\delta)$ such   that
\begin{equation}\label{sm-1}
(1+N\ell)\epsilon <\delta, \quad  (1-\epsilon)^{1+ \ell N} \ge (1-\delta)^{1/2}.
\end{equation}

Define
\[
\begin{cases} (\varphi,\Psi)=(\tilde \varphi,\tilde \Psi)+(\varphi_0,\Psi_0),\\
G_1=G'_0 \cup V_N, \;\;\; G_j=V_{N-j+1} \quad  \forall\, 2\le j\le N.
\end{cases}
\]
Then  $(\varphi,\Psi)$ is smooth and compactly supported in $G$, and satisfies
\[
\begin{cases}
 \text{$\dv\Psi=0$ and $\eta+(D \varphi,\Psi)\in [\T(\xi_1,\dots  ,\xi_N)]_{\epsilon} \subset [\T(\xi_1,\dots  ,\xi_N)]_{\delta}$ in $G;$}\\
\| \varphi \|_{L^\infty(G)} <(1+N\ell)\epsilon<\delta;\\
 \mbox{$\eta+ (D \varphi ,\Psi )  =\xi_j$ in $G_j$} \quad \forall\,   1\le j\le N. 
  \end{cases}
\]

Moreover, from   (\ref{start-01}), (\ref{scheme-3}), (\ref{lg-1}) and (\ref{sm-1}), it follows that
\[
\begin{split} |G_1| =|G_0'|+|V_N| & \ge (1-\epsilon) \lambda |G|+(1-\epsilon)^{1+\ell N} (1-\tau^{\ell})\nu_{1}^{1} (1-\lambda) |G|\\
&\ge (1-\delta) ( \lambda +(1-\lambda)\nu_1^1)|G| =(1-\delta) \nu_1|G|;
\\
 |G_j| = |V_{N-j+1}| &\ge  (1-\epsilon)^{1+\ell N} (1-\tau^{\ell})\nu_{1}^{j} (1-\lambda) |G|\\
&\ge (1-\delta)  (1-\lambda)\nu_1^j |G| =(1-\delta) \nu_j |G|  \quad \forall\,2\le j\le N.
\end{split}
\]
Therefore, assertions (a)--(c) of the theorem are satisfied.   

This completes the proof.
\end{proof}

%%%%%%%%%%%%%%%%%%%%%%%

In what follows, for $\rho \in \mathbb{R}^{m \times n} \times \mathbb{R}^{m \times n}$, we write
\[
\rho = (\rho^1, \rho^2), \; \text{ with } \, \rho^1, \rho^2 \in \mathbb{R}^{m \times n}.
\]
For $r > 0$, let $\mathbb{B}_r = \mathbb{B}_r(0)$ and $\bar{\mathbb{B}}_r = \bar{\mathbb{B}}_r(0)$ denote the open and closed balls of radius $r$ centered at $0$ in $\mathbb{R}^{m \times n} \times \mathbb{R}^{m \times n}$.

We next recall the definition of Condition~$O_N$ from \cite{GKY26}.

\begin{definition}\label{O-N}
A function $\sigma \colon \mathbb{R}^{m \times n} \to \mathbb{R}^{m \times n}$
is said to satisfy \emph{Condition $O_N$} if there exist constants
\[
r_0 > 0, \quad 0 < \delta_1 < \delta_2 < 1,
\]
and continuous functions
\[
(\kappa_i,\gamma_i) \colon \overline{\mathbb{B}}_{r_0}
\to [1/\delta_2,\,1/\delta_1] \times \Gamma,
\qquad 1 \le i \le N,
\]
satisfying $\gamma_1(\rho) + \cdots + \gamma_N(\rho) = 0$
 for all $\rho \in \overline{\mathbb{B}}_{r_0},$  such that, if
\[
\begin{cases}
\pi_1(\rho) = \rho, \\[0.2em]
\pi_i(\rho) = \rho + \gamma_1(\rho) + \cdots + \gamma_{i-1}(\rho),
\qquad 2 \le i \le N, \\[0.3em]
\xi_i(\rho) = \pi_i(\rho) + \kappa_i(\rho)\gamma_i(\rho),
\qquad 1 \le i \le N,
\end{cases}
\quad \rho \in \overline{\mathbb{B}}_{r_0},
\]
then the following properties hold:
\begin{itemize}
\item[(P1)]
\begin{enumerate}
\item[(i)]
$\xi_i(\rho) \in \mathcal{K}$ for all $\rho \in \overline{\mathbb{B}}_{r_0}$
and $1 \le i \le N$, where $\mathcal{K}$ denotes the graph of $\sigma$.
\smallskip
\item[(ii)] $\xi_i^1(\overline{\mathbb{B}}_{r_0}) \cap \xi_j^1(\overline{\mathbb{B}}_{r_0})
= \pi_i^1(\overline{\mathbb{B}}_{r_0}) \cap \pi_j^1(\overline{\mathbb{B}}_{r_0})
= \emptyset$ for all $ 1 \le i \ne j \le N.$
\end{enumerate}

\medskip

\item[(P2)]
For $1 \le i \le N$, $0 < r \le r_0$, and $0 \le \lambda \le 1$, define
\[
S_i^r(\lambda)
:= \bigl\{ \lambda \xi_i(\rho) + (1-\lambda)\pi_i(\rho)
: \rho \in \mathbb{B}_r \bigr\}.
\]
Then, for each $0 < r \le r_0$ and
$\lambda \in \{0\} \cup [\delta_2,1)$, the family
\[
\{ S_i^r(\lambda) \}_{i=1}^N
\]
consists of open and pairwise disjoint sets.
\end{itemize}

In this case, for all $1 \le i \le N$, $0 < r \le r_0$, and $0 \le \lambda \le 1$, define
\[
L_i^r(\lambda)
:= \bigcup_{\substack{\alpha \in S_i^r(\lambda),\; \beta \in S_i^r(0)\\
\alpha - \beta \in \Gamma}}
[\alpha,\beta],
\quad
S^r(\lambda) := \bigcup_{i=1}^N S_i^r(\lambda),
\quad
L^r(\lambda) := \bigcup_{i=1}^N L_i^r(\lambda),
\]
and, for $\delta_2 \le \mu < 1$, define
\[
\Sigma^r(\mu) := \bigcup_{\delta_2 \le \lambda \le \mu} L^r(\lambda).
\]
Finally, set
\[
\Sigma(1) := \bigcup_{\delta_2 \le \mu < 1} \Sigma^{r_0}(\mu).
\]
\end{definition}

 \begin{remark}\label{remk33}    Let $\sigma$ satisfy Condition $O_N$ as above. We summarize some key consequences  established in \cite{GKY26}.

 \medskip
 
(i)  For each $1\le i\le N,$ $0\le\lambda\le 1$, and $\rho\in\bar\B_{r_0},$ define
\begin{equation}\label{chi-i}
 \chi_i(\rho)= {1}/{\kappa_i(\rho)},\quad \zeta_i(\lambda,\rho) =\lambda \xi_i(\rho) +(1-\lambda)\pi_i(\rho).
\end{equation}
Then  $\delta_1\le \chi_i(\rho) \le \delta_2$,  and
\[
\pi_{i+1}(\rho)=\chi_i(\rho)\xi_i(\rho)+(1-\chi_i(\rho))\pi_i(\rho).
\]

Moreover, for all  $0<r\le r_0$ and $\rho\in \B_r,$  
\[
\zeta_i(\lambda,\rho)\in S^r_i(\lambda),\quad \pi_i(\rho)\in S^r_i(0),\quad \zeta_i(\lambda,\rho)-\pi_i(\rho)=\lambda\kappa_i(\rho)\gamma_i(\rho) \in \Gamma,
\]
and hence,
\[
[\zeta_i(\lambda,\rho),\, \pi_i(\rho)]  \subset L^r_i(\lambda).
\]

\medskip

(ii)  Let  $\delta_2<\lambda\le 1,$ $0<r\le r_0$, and $\rho\in \B_{r}.$ Set 
$X_i=\zeta_i(\lambda,\rho)$, defined in \eqref{chi-i},  for $1\le i\le N.$   Then  
\begin{equation}\label{eq-0} 
\pi_i(\rho) =\sum_{j=1}^N  \nu_i^j(\lambda,\rho) X_j \qquad \forall \, 1\le i\le N,
\end{equation}
where the coefficients $\nu_i^j(\lambda,\rho) = \nu_i^j$, for $1 \le i,j \le N$, are determined by \eqref{co-eff-0} with constants
 \[
 t_k= \frac{\chi_k(\rho)}{\lambda} \qquad \forall \, 1\le k\le N.
 \]
 
 Since ${\delta_1}/{\lambda}\le t_k\le   {\delta_2}/{\lambda}<1$ for all  $1\le k\le N,$   it follows that,  for each $1\le i\le N,$
\begin{equation}\label{eq-1}
\sum_{j=1}^N  \nu_i^j(\lambda,\rho) =1,  \quad  \nu_i^j(\lambda,\rho)  \ge  (\lambda-\delta_2)^{N-1}\delta_1 \qquad \forall\, 1\le  j \le N.
\end{equation}

Moreover,   the $N$-tuple $(X_1,\dots,X_N)$ is a $\T_N$-configuration, and
\[
\T(X_1,\dots,X_N)\subset L^r(\lambda).
\]

\medskip

(iii)   For each $1\le i\le N$, 
 $\delta_2\le \lambda<1,$ and $0< r\le r_0,$ the sets 
$
 L^r_i(\lambda)$, $L^r(\lambda)$, $\Sigma^r(\lambda)$, and $ \Sigma(1)$  are all open. 
 
 Moreover, if  $1\le k\le N$ and $0< r<s \le r_0,$ then
\[
\overline{S_k^r(\lambda)}\subset S_k^s(\lambda) \quad\forall\, 0 \le\lambda\le 1, \quad \overline{\Sigma^r(\lambda)}\subset \Sigma^s(\lambda) \quad\forall\, \delta_2\le \lambda<1.
\]

Furthermore,  if $\Delta\subset \Sigma(1)$ is  compact, then there exist $r\in (0, r_0)$ and $\lambda\in [\delta_2,1)$ such that 
\[
\Delta\subset \Sigma^{r}(\lambda).
\]

 \end{remark}

\medskip

The  key steps of  our convex integration scheme rely on the following result, similar to \cite[Theorem 3.6]{GKY26}.

\begin{theorem}\label{lem1} Let $\sigma$ satisfy Condition $O_N.$ Let $\delta_2\le \lambda \le \mu<1,$ $0<r\le r_0,$ and   
\[
Y =q\, \zeta_i(\lambda', \rho)+(1-q)\, \pi_i(\rho')\in \Sigma^r(\lambda),
\]
where $q\in [0,1],$ $1\le i\le N,$ $\delta_2 \le \lambda' \le \lambda,$ and $\rho, \rho' \in \B_r$ with
$
\zeta_i(\lambda', \rho)- \pi_i(\rho')\in\Gamma.
$
Let
\[
 X_j=\zeta_j(\mu, \rho), \quad X'_j=\zeta_j(\mu, \rho') \quad\forall\, 1\le j\le N.
\]

Then,  for any bounded open set $G\subset \R^{n}$  and  $0<\tau<1,$ there exists  a function $(\varphi, \Psi)\in C_0^\infty (G;\R^m \times \R^{m\times n})$   with  the following properties:
\begin{itemize}
\item[(a)] $\dv\Psi=0$ and $Y+(D\varphi,  \Psi)\in \Sigma^r(\mu)$ in $G;$
\item[(b)]  $\|\varphi\|_{L^\infty(G)} <\tau;$
\item[(c)]   there exist pairwise disjoint open  sets $G'_1,\dots  ,G'_N,  G''_1,\dots, G''_N\subset\subset G$ such that
\begin{equation}\label{eq-d}
\begin{cases}
 Y+(D\varphi,  \Psi)=\chi_{G'_j} X_j+\chi_{G''_j} X_j'\in S_j^r(\mu) \;\; \text{in $G_j=G'_j\cup G''_j$} & \forall\, 1\le j\le N,\\[2mm]
|G_i|\ge  (1-\tau) \big [(1-q) \nu_i^i(\mu,\rho') +q  \big (\frac{\lambda'}{\mu} +(1-\frac{\lambda'}{\mu})  \nu_i^i(\mu,\rho)\big )\big  ] |G|,\\[1mm]
|G_j| \ge (1-\tau)  \big [(1-q) \nu_i^j(\mu,\rho') +q  (1-\frac{\lambda'}{\mu})  \nu_i^j(\mu,\rho) \big ] |G| &\forall\, j\ne i,
\end{cases}
\end{equation}
where $\nu_i^j(\mu,\rho)\; (1\le i,j\le N)$  are defined in \eqref{eq-0}.
Moreover, \[ \sum_{j=1}^N  |G_j|\ge (1-\tau)|G|.\]
\end{itemize}

 {\rm (See Figure \ref{fig1} for an illustration in the case  $N=5$.)}
  \end{theorem}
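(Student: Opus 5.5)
I will build $(\varphi,\Psi)$ in three nested layers, using Lemma~\ref{lem-0} and then Theorem~\ref{convex-block-thm} twice. Set $\alpha=\zeta_i(\lambda',\rho)$ and $\beta=\pi_i(\rho')$, so that $Y=q\alpha+(1-q)\beta$ and $\alpha-\beta\in\Gamma$.

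\emph{First layer.} Apply Lemma~\ref{lem-0} on $G$ with wave $\gamma=\alpha-\beta$, weight $q$ (in the role of $\lambda$) and a small $\epsilon$. This gives $(\varphi_0,\Psi_0)$ with $\dv\Psi_0=0$ and $Y+(D\varphi_0,\Psi_0)\in[\alpha,\beta]_\epsilon$. Moreover $Y+(D\varphi_0,\Psi_0)=\alpha$ on an open set $G'$ with $|G'|\ge(1-\epsilon)q|G|$, and $=\beta$ on a disjoint $G''$ with $|G''|\ge(1-\epsilon)(1-q)|G|$. The segment $[\alpha,\beta]$ lies in $L_i^r(\lambda')\subset\Sigma^r(\lambda)\subset\Sigma^r(\mu)$. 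By Remark~\ref{remk33}(iii), $\overline{\Sigma^{r}(\lambda)}$ restricted to the compact segment has positive distance to the complement of the open set $\Sigma^r(\mu)$. Hence for small $\epsilon$ the $\epsilon$-neighbourhood stays inside $\Sigma^r(\mu)$. If I am worried about the boundary case $\lambda=\mu$, I will instead use that the compact segment sits inside the open set $\Sigma^r(\mu)$.

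\emph{Second layer, on $G''$.} Here the state is $\pi_i(\rho')$. By Remark~\ref{remk33}(ii) with $\mu$ in place of $\lambda$, $(X_1',\dots,X_N')$ is a $\T_N$-configuration whose $\pi$-points are exactly $\pi_j(\rho')$. Indeed, $\zeta_j(\mu,\rho')-\pi_j(\rho')=\mu\kappa_j\gamma_j$ and $\pi_{j+1}=t_jX_j'+(1-t_j)\pi_j$ with $t_j=\chi_j/\mu$. So $\pi_i(\rho')$ is the point $\eta$ of Theorem~\ref{convex-block-thm} with $\lambda=0$. Applying that theorem on $G''$ with tolerance $\epsilon$ gives a block landing on $X'_j$ on sets $G''_j$ of measure at least $(1-\epsilon)\nu_i^j(\mu,\rho')|G''|$. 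Its values stay in $[\T(X')]_\epsilon\subset L^r(\mu)_\epsilon$.

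\emph{Second layer, on $G'$.} Here the state is $\alpha=\zeta_i(\lambda',\rho)$. Since $\lambda'\le\mu$, we can write $\alpha=\frac{\lambda'}{\mu}X_i+(1-\frac{\lambda'}{\mu})\pi_i(\rho)$, which lies on the leg $[X_i,\pi_i(\rho)]$ of the $\T_N$-configuration $(X_1,\dots,X_N)$. Theorem~\ref{convex-block-thm} with weight $\lambda'/\mu$ then gives sets $G'_j$ with
\[
|G'_i|\ge(1-\epsilon)\Big(\tfrac{\lambda'}{\mu}+\big(1-\tfrac{\lambda'}{\mu}\big)\nu_i^i(\mu,\rho)\Big)|G'|, \qquad |G'_j|\ge(1-\epsilon)\big(1-\tfrac{\lambda'}{\mu}\big)\nu_i^j(\mu,\rho)|G'| \quad (j\ne i).
\]

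\emph{Assembly.} Sum the three blocks. Their supports nest (the second-layer blocks are supported in $G'$ and $G''$), so divergence-freeness and $\|\varphi\|_\infty<3\epsilon$ are immediate. The measure bounds in \eqref{eq-d} follow by multiplying the estimates, giving a factor $(1-\epsilon)^2\ge 1-\tau$ for small $\epsilon$. The total-measure bound follows from \eqref{eq-1}. Finally, $X_j,X_j'\in S_j^r(\mu)$ since $\rho,\rho'\in\B_r$.

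\emph{Main obstacle: condition (a).} I must show that the $\epsilon$-neighbourhoods of $\T(X)$ and $\T(X')$ lie in $\Sigma^r(\mu)$. By Remark~\ref{remk33}(ii), each $\T$ is a compact subset of the open set $L^r(\mu)\subset\Sigma^r(\mu)$, so a small enough $\epsilon$ works. I therefore choose $\epsilon$ last, as the minimum of the finitely many margins (the neighbourhood of $[\alpha,\beta]$, of $\T(X)$ and of $\T(X')$) together with the requirements $3\epsilon<\tau$ and $(1-\epsilon)^2\ge1-\tau$.
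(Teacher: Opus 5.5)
Your proposal is correct and follows the paper's proof step for step. Lemma~\ref{lem-0} splits $Y$ along $[\zeta_i(\lambda',\rho),\pi_i(\rho')]$. Theorem~\ref{convex-block-thm} is then applied on $G''$ starting from $\pi_i(\rho')$, and on $G'$ starting from $\zeta_i(\lambda',\rho)=\frac{\lambda'}{\mu}X_i+(1-\frac{\lambda'}{\mu})\pi_i(\rho)$. The tolerance is chosen last from the compact-in-open margins. One small correction: Remark~\ref{remk33}(iii) concerns enlarging $r$, not $\lambda$, so it does not justify your first margin argument. Your fallback, that the compact segment lies in the open set $\Sigma^r(\mu)$, is exactly the paper's argument.
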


   \begin{figure}[ht]
\begin{center}
\begin{tikzpicture}[scale =1]
\draw[thick] (-5,-2.2)--(1.1,-0.9);
   \draw[](-5.2,-2) node[below]{$X'_1$};
      \draw[](-4.6,-2.1) node[below]{$X_1$};
    \draw[](0.2,5.9) node[above]{$X'_3$};
        \draw[](0.5,6.6) node[left]{$X_3$};
              \draw[](1,-3.2) node[right]{$X'_5$};
       \draw[](0.4,-3.4) node[right]{$X_5$};
   \draw(3.9,1.6) node[right]{$X_4$};
     \draw[](3.8,2.1) node[right]{$X'_4$};
    \draw[](-6.3,2.25) node[above]{$X'_2$};
        \draw[](-6.8,2.3) node[below]{$X_2$};
      \draw[thick] (1.1,-0.9)--(0.9,3.1);
  \draw[thick] (1.1,-0.9)--(1.205,-3);
    \draw[ thick] (-5,1.2)--(-2.1,4.1);
     \draw[thick] (-1.95,-1.55)--(-6.3,1.2+2.75*1.3/3.05);
     \draw[ thick] (-0.1,6.1)--(-2.1,4.1);
      \draw[ thick] (-2.1,4.1)--(0.9,3.1);
          \draw[thick] (0.9,3.1)--(3.9,2.1);
       \draw(-1.1,5.4) node[left]{$\zeta_3(\lambda',\rho)$};
     \draw[thick] (-2.2,4.3)--(4,1.7);
       \draw[thick] (-2.2,4.3)--(-5.5,1.1);
         \draw[thick] (-2.2,4.3)--(0,4.3+6.4/3);
        \draw[thick ] (-2,-1.8)--(-5.5,1.1);
          \draw[thick ] (-6.5,-1.8+26.1/7)--(-5.5,1.1);
        \draw[thick ] (-2,-1.8)--(0.8,-1.4);
               \draw[thick ] (-2,-1.8)--(-4.8,-2.2);
                \draw[thick ] (0.9,3)--(0.8,-1.4);
                 \draw[thick ] (0.76,-3.16)--(0.8,-1.4);
   \draw[fill] (-5.5,1.1)  circle (0.05);
            \draw(-5.7,1.1) node[below]{$\pi_3(\rho)$};
                    \draw(-4.9,1.2) node[right]{$\pi_3(\rho')$};
            \draw[fill]  (-5,1.2) circle (0.05);
              \draw[ultra thick] (-1.1,5.35)--(-5,1.2);
                \draw[fill]  (-1.1, 5.35) circle (0.05);
         \draw(-3.1,3.1) node[right]{$Y$};
           \draw[fill]  (-3.05,3.28) circle (0.05);
\end{tikzpicture}
\end{center}
\caption{Illustration of Theorem~\ref{lem1} for $N=5$.  
Here $i=3$, and $Y$ lies on the (ultra-thick) line segment $[\zeta_3(\lambda',\rho), \pi_3(\rho')]$.
The sets $\T(X_1,\dots,X_N)$ and $\T(X'_1,\dots,X'_N)$ are represented by the corresponding  line segments.
The points $X_j, X'_j$ lie in $S^r_j(\mu)$, and all segments shown  lie in $\Sigma^r(\mu)$.
Only these points and segments are used in the proof.}
\label{fig1}
\end{figure}
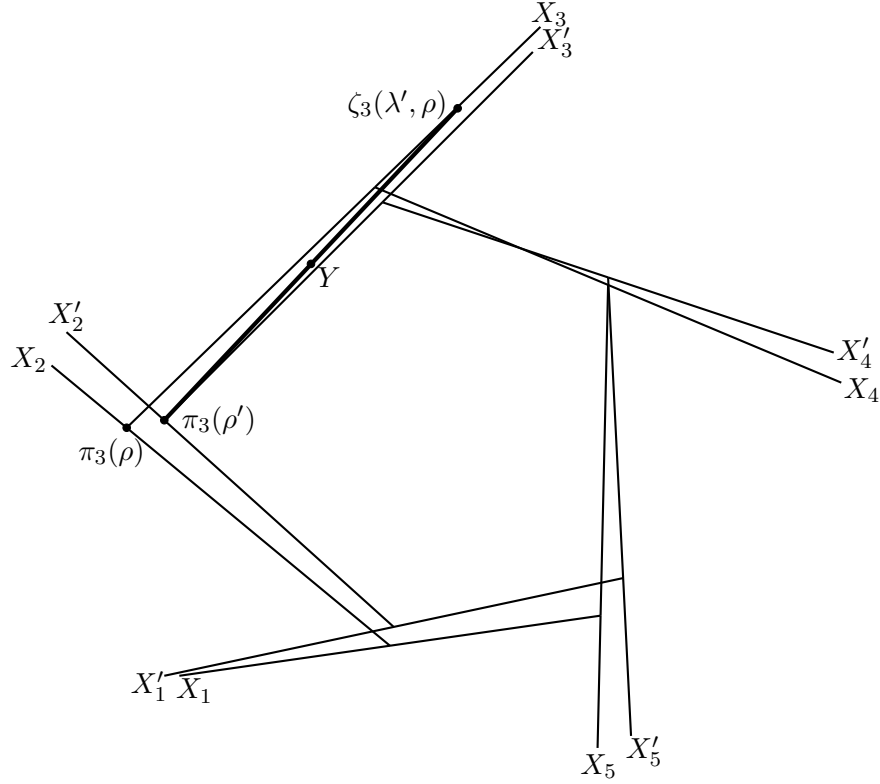

\begin{proof}  

1. Since $[\zeta_i(\lambda', \rho),\, \pi_i(\rho')]\subset L^r_i(\lambda')\subset \Sigma^r(\mu)$ and $\Sigma^r(\mu)$ is open, there exists   $\epsilon_1>0$ such that
\[
[\zeta_i(\lambda', \rho),\, \pi_i(\rho')]_{\epsilon_1}\subset \Sigma^r(\mu).
\]
In what follows, let $ 0< \delta<1$ be a number to be determined later.

Apply Lemma \ref{lem-0} with  $\gamma=\zeta_i(\lambda', \rho)- \pi_i(\rho'),$ $\lambda=q$  and $\epsilon=\delta$ to obtain   $(\varphi_1, \Psi_1 )\in C_0^\infty (G;\R^m \times \R^{m\times n})$   with  the following properties:
\begin{equation}\label{eq-d-0}
\begin{cases}
\text{$\dv\Psi_1=0$  and $Y+(D\varphi_1,  \Psi_1)\in [\zeta_i(\lambda', \rho),\, \pi_i(\rho')]_\delta$ in $G;$}\\
\text{$\|\varphi_1\|_{L^\infty(G)} <\delta;$}\\
\text{there exist disjoint open  sets $G', G''\subset\subset G$ such that}\\
$$
\begin{cases}
\mbox{$Y+(D\varphi_1,  \Psi_1)=\zeta_i(\lambda', \rho)$\, in $G'$,} &|G'|\ge  (1-\delta) q\, |G|,\\
\mbox{$Y+(D\varphi_1,  \Psi_1)=\pi_i(\rho')$\, in $G''$,} & |G''|\ge  (1-\delta)  (1-q) \,|G|.
\end{cases}
$$
\end{cases}
\end{equation}

2. If $G''=\emptyset$ (necessarily, $q=1$),  then we omit this step. Assume $G''\ne \emptyset.$ Note that by \eqref{eq-0},
\[
\pi_i(\rho') = \sum_{j=1}^N  \nu_i^j(\mu,\rho') X'_j.
\]
Since  $(X'_1,\dots ,X'_N)$ is  a $\T_N$-configuration with $\T(X'_1,\dots ,X'_N) \subset  \Sigma^r(\mu),$  by the openness of  $\Sigma^r(\mu),$  there exists $\epsilon_2>0$ such that
\[
  [\T(X'_1,\dots ,X'_N)]_{\epsilon_2} \subset \Sigma^r(\mu).
\]
Apply Theorem \ref{convex-block-thm}  with $(\xi_1,\ldots,\xi_N)=(X'_1,\dots ,X'_N)$ and  $\eta=\pi_i(\rho')$ to obtain  $(\varphi_2, \Psi_2)\in C_0^\infty (G'';\R^m \times \R^{m\times n})$   with  the following properties:
\begin{equation}\label{eq-d-1}
\begin{cases}
\text{$\dv\Psi_2=0$ and $\pi_i(\rho')+(D\varphi_2,  \Psi_2)\in [\T(X'_1,\dots ,X'_N)]_\delta$ in $G'';$}\\
\text{$\|\varphi_2\|_{L^\infty(G'')} <\delta;$}\\
\text{there exist pairwise disjoint open  sets $G''_1,\dots  ,G''_N\subset\subset G''$ such that}\\
$$
\begin{cases}
\mbox{$\pi_i(\rho')+(D\varphi_2,  \Psi_2)=X'_j$ in $G''_j$}\quad \forall\, 1\le j\le N,\\
|G''_j|\ge  (1-\delta)   \nu^j_i(\mu,\rho') \,|G''|   \quad\forall\, 1\le j\le N.
\end{cases}
$$
\end{cases}
\end{equation}

3. If $G'=\emptyset$  (necessarily, $q=0$),  then we skip this step.  Assume $G'\ne \emptyset$.  Note that by \eqref{eq-0},
\[
\zeta_i(\lambda',\rho) =  \left (\frac{\lambda'}{\mu} +\Big (1-\frac{\lambda'}{\mu}\Big )  \nu^i_i(\mu,\rho)\right )X_i+\sum_{1\le j\le N,\, j\ne i} \Big ( 1-\frac{\lambda'}{\mu}\Big ) \nu_i^j(\mu,\rho) X_j.
\]
   Since  $(X_1,\dots ,X_N)$ is  a $\T_N$-configuration with $\T(X_1,\dots ,X_N) \subset  \Sigma^r(\mu),$  by the openness of  $\Sigma^r(\mu),$  there exists $\epsilon_3>0$ such that
   \[
    [\T(X_1,\dots ,X_N)]_{\epsilon_3} \subset \Sigma^r(\mu).
    \]
Now apply Theorem \ref{convex-block-thm} with $(\xi_1,\ldots,\xi_N)=(X_1,\dots ,X_N)$  and $\eta=\zeta_i(\lambda',\rho)$ to  obtain   $(\varphi_3, \Psi_3)\in C_0^\infty (G';\R^m \times \R^{m\times n})$   with  the following properties:
\begin{equation}\label{eq-d-2}
\begin{cases}
\text{$\dv\Psi_3=0$ and $\zeta_i(\lambda',\rho)+(D\varphi_3,  \Psi_3)\in [\T(X_1,\dots ,X_N)]_\delta$ in $G';$}\\
\text{$\|\varphi_3\|_{L^\infty(G')} <\delta;$}\\
\text{there exist pairwise disjoint open  sets $G'_1,\dots  ,G'_N\subset\subset G'$ such that}\\
$$
\begin{cases}
\mbox{$\zeta_i(\lambda',\rho)+(D\varphi_3,  \Psi_3)=X_j$ in $G'_j$}\quad \forall\, 1\le j\le N,\\
|G'_i|\ge  (1-\delta) \big (\frac{\lambda'}{\mu} +(1-\frac{\lambda'}{\mu})  \nu_i^i(\mu,\rho)\big ) |G'|,\\
|G'_j| \ge  (1-\delta)   ( 1-\frac{\lambda'}{\mu})  \nu_i^j(\mu,\rho) \,|G'| \quad\forall\, j\ne i.
\end{cases}
$$
\end{cases}
\end{equation}

4. Finally, let
\[
 (\varphi,\Psi)= (\varphi_1,\Psi_1 )\chi_{G} + (\varphi_2,\Psi_2 )\chi_{G''} +(\varphi_3,\Psi_3)\chi_{G'} .
\]
Then  $(\varphi, \Psi)\in C_0^\infty (G;\R^m \times \R^{m\times n})$ and, by \eqref{eq-d-0}-\eqref{eq-d-2}, we have 
\[
  \begin{cases}  
\text{$ \dv\Psi=0$ in $G$;}\\
 Y+(D\varphi,  \Psi)  \in [\zeta_i(\lambda', \rho),\, \pi_i(\rho')]_\delta\cup [\T(X'_1,\dots ,X'_N)]_\delta \cup [\T(X_1,\dots ,X_N)]_\delta \text{ in $G$;}\\ 
\|\varphi\|_{L^\infty(G)} <2\delta;\\
$$
\begin{cases}
 Y+(D\varphi,  \Psi)=\chi_{G'_j} X_j+\chi_{G''_j} X_j' \in S_j^r(\mu) \;\; \text{in $G_j=G'_j\cup G''_j$} & \forall\, 1\le j\le N,\\
|G_i|\ge  (1-\delta)^2 \big [(1-q) \nu_i^i(\mu,\rho') +q  \big (\frac{\lambda'}{\mu} +(1-\frac{\lambda'}{\mu})  \nu_i^i(\mu,\rho)\big )\big ] |G|,\\
|G_j| \ge (1-\delta)^2 \big [(1-q) \nu_i^j(\mu,\rho') +q  (1-\frac{\lambda'}{\mu})  \nu_i^j(\mu,\rho) \big ] |G| & \forall\, j\ne i.
\end{cases}
$$
\end{cases}
\]
Consequently,  all requirements (a)--(c) of the theorem will follow  if $0<\delta<1$ is chosen to satisfy
\[
0<\delta<\min\left \{\epsilon_1, \, \epsilon_2,\, \epsilon_3,\, \frac{\tau}{2},\, 1-\sqrt{1-\tau}\right \}.
\]
\end{proof}

%%%%%%%%%%%%%%%%%%%%%%%%%%%%%%

\section{Proof of Theorem \ref{mainthm}}\label{s-4}

In this section, we prove the main Theorem~\ref{mainthm}, following the construction scheme outlined in the Introduction.

\subsection{Main stage theorem}   We first establish a key stage theorem, analogous to \cite[Theorem 4.1]{GKY26}, which plays a central role in the constructions of \eqref{pdr2}.

 \begin{theorem}\label{thm1}  Assume that  $\sigma\colon\R^{m\times n}\to \R^{m\times n} $ satisfies  Condition $O_N,$ that
 $G\subset \R^{n}$ is a bounded open set, and that $u\in C^1(\bar G;\R^m)$ and  $V \in C(\bar G; \R^{m\times n})$ satisfy 
\begin{equation}\label{strict-sub}
 \dv V=0 \; \text{ in $G$,} \quad (Du, V)\in \Sigma^r(\lambda) \; \text{ on $\bar G,$}
\end{equation}
where  $0<r<r_0$ and $\delta_2\le \lambda<1$ are constants.

Then, for any  $\lambda<\mu<1,$  $r<s<r_0$,  and  $0<\epsilon<1,$  there exist a function 
\[
(\tilde u,\tilde V)\in (u,V)+C^{\infty}_{0}(G;\R^m\times\R^{m\times n})
\]
 and finitely many disjoint cubes $Q_1,\ldots,Q_M\subset G$ with $|G\setminus\cup_{j=1}^M Q_j|<\epsilon|G|$ satisfying the following:
\begin{itemize}
\item[(a)] $0<\mathrm{rad}(Q_j)<\epsilon\;\;\forall\,1\le j\le M;$
\item[(b)] $ \dv \tilde V=0$ in $G$  and  $(D\tilde u,  \tilde V)\in \Sigma^{s}(\mu) $ on  $\bar G;$
\item[(c)]  $\|\tilde u-u\|_{L^\infty(G)} <\epsilon;$
\item[(d)] $|Q_j\cap\{(D\tilde u, \tilde V) \in S^{s}(\mu) \}| \ge (1-\epsilon)|Q_j|\;\; \forall\, 1\le j\le M;$
\item[(e)] $|\{(D\tilde u, \tilde V) \in S^{s}(\mu) \}| \ge (1-\epsilon)|G|;$
\item[(f)]  $\begin{cases}
  |\{(D\tilde u,\tilde V) \in S_k^{s}(\mu) \}| \ge \frac12 (\mu- \lambda) (\mu-\delta_2)^{N-1}\delta_1 |G|,\\
 |\{(D\tilde u,\tilde V) \in S_k^{s}(\mu) \}| \ge \frac{\lambda}{\mu} |\{(D u,V) \in  S_k^r(\lambda) \}| \end{cases}\forall\, 1\le k\le N;$
 \item[(g)] $\|D\tilde u-Du\|_{L^1(G)}\le C_0\big[ |F_0|+   (\epsilon+(\mu- \lambda) ) |G|\big ],$ where
 \[
 F_0=\{(D u,V) \notin S^{r}(\lambda)\}
 \] and  $C_0>0$  is a constant depending only on  $\delta_2$ and the  set $\Sigma(1).$
\end{itemize}
  \end{theorem}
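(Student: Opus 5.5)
The plan is to follow the localization-and-freezing strategy of \cite[Theorem 4.1]{GKY26}, with Theorem~\ref{lem1} as the local building block. Since $(Du,V)$ is continuous on the compact set $\bar G$ and lies in the open set $\Sigma^r(\lambda)$, we will first use uniform continuity together with Remark~\ref{remk33}(iii). This lets us fix a small $\eta>0$ such that every point within distance $\eta$ of $(Du,V)(\bar G)$ still lies in $\Sigma^{r'}(\lambda)$ for some $r<r'<s$, and such that a $C\eta$-perturbation of $\Sigma^{r'}(\mu)$ stays in $\Sigma^s(\mu)$.

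We then cover $G$, up to a set of measure less than $\epsilon|G|/2$, by finitely many disjoint cubes $Q_1,\dots,Q_M\subset\subset G$. Their radii are chosen below $\epsilon$ and small enough that the oscillation of $(Du,V)$ on each $Q_j$ is less than $\eta$. This gives (a).

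Fix a cube $Q=Q_j$ with center $x_j$, and set $Y_j=(Du(x_j),V(x_j))$. This point lies in $L^{r}_i(\lambda')$ for some $i$ and some $\delta_2\le\lambda'\le\lambda$, so it can be written as
\[
Y_j=q\,\zeta_i(\lambda',\rho)+(1-q)\,\pi_i(\rho')
\]
with $\zeta_i(\lambda',\rho)-\pi_i(\rho')\in\Gamma$ and $\rho,\rho'\in\B_r$. If $Y_j\in S^r_k(\lambda)$, we choose instead the representation $q=1$, $\lambda'=\lambda$, $i=k$.

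We now apply Theorem~\ref{lem1} on $Q$ with parameters $\mu$ and $\tau\ll\epsilon$. This produces $(\varphi_j,\Psi_j)\in C^\infty_0(Q)$ with $\dv\Psi_j=0$ and $Y_j+(D\varphi_j,\Psi_j)\in\Sigma^{r}(\mu)$.

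The main obstacle is that $(Du,V)$ is only close to the constant $Y_j$ on $Q$, not equal to it. We handle this by setting
\[
\tilde u=u+\sum_j\varphi_j,\qquad \tilde V=V+\sum_j\Psi_j .
\]
On $Q_j$ this gives $(D\tilde u,\tilde V)=Y_j+(D\varphi_j,\Psi_j)+\big((Du,V)-Y_j\big)$, which is within $\eta$ of a point of $\Sigma^r(\mu)$. Hence it lies in $\Sigma^{s}(\mu)$. For the same reason, it lies in $S^s_k(\mu)$ on the good sets $G_k$ from Theorem~\ref{lem1}(c), because $S^r_k(\mu)$ has closure inside $S^s_k(\mu)$ by Remark~\ref{remk33}(iii). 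Here we shrink $\eta$ using the compactness of the finitely many targets $\overline{S^r_k(\mu)}$.

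Divergence-freeness is preserved because $\dv V=0$ and each $\Psi_j$ is divergence-free with compact support in $Q_j$. Outside $\bigcup_j Q_j$ we have $(\tilde u,\tilde V)=(u,V)\in\Sigma^r(\lambda)\subset\Sigma^s(\mu)$, which gives (b). Property (c) follows from $\|\varphi_j\|_\infty<\tau$ on disjoint supports. Properties (d) and (e) follow from $\sum_k|G_k|\ge(1-\tau)|Q_j|$ together with the small uncovered measure.

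For (f), we use the lower bounds in \eqref{eq-d} together with \eqref{eq-1}. Since $\mu>\lambda\ge\lambda'$, we have $(1-\lambda'/\mu)\ge(\mu-\lambda)$ and $\nu^j_i\ge(\mu-\delta_2)^{N-1}\delta_1$. Every $G_k$ in every cube therefore has measure at least $(1-\tau)(\mu-\lambda)(\mu-\delta_2)^{N-1}\delta_1|Q_j|$. Summing over the cubes and absorbing $\tau$ and the uncovered set gives the factor $\tfrac12$. For the second inequality in (f), on cubes with $Y_j\in S^r_k(\lambda)$ the representation $q=1$, $\lambda'=\lambda$, $i=k$ gives $|G_k|\ge(1-\tau)\tfrac{\lambda}{\mu}|Q_j|$. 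Here we choose the cubes so that $\{(Du,V)\in S^r_k(\lambda)\}$ is approximated up to small measure by such cubes, after slightly enlarging $r$ if needed. We expect this bookkeeping to be delicate.

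For (g), we write $|D\tilde u-Du|=|D\varphi_j|$ on $Q_j$, and this is bounded by the diameter $C$ of $\Sigma(1)$ everywhere. On cubes where $Y_j\in S^r(\lambda)$, the perturbation is nonzero only outside $G_i'$, up to $\tau$. Theorem~\ref{lem1} gives $|G_i'|\ge(1-\tau)\tfrac{\lambda}{\mu}|Q_j|$, so the remaining part has measure at most $\big(\tau+(1-\lambda/\mu)\big)|Q_j|\le C\big(\epsilon+(\mu-\lambda)\big)|Q_j|/\delta_2$. The other cubes essentially lie within $(F_0)$ up to oscillation errors, and they contribute at most $C|F_0|+C\epsilon|G|$. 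This yields (g) with $C_0$ depending only on $\delta_2$ and $\Sigma(1)$.
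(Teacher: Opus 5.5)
Your overall architecture matches the paper's. You freeze $(Du,V)$ at cube centres, apply Theorem~\ref{lem1}, and absorb the oscillation using the positive gaps between $\Sigma^r(\mu)$ and $\partial\Sigma^s(\mu)$, and between $S^r_k(\mu)$ and $\partial S^s_k(\mu)$. Properties (a)--(e) and the first inequality of (f) go through as you describe.

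\textbf{The gap: the second inequality of (f).} This inequality carries the exact constant $\frac{\lambda}{\mu}$, so it has no slack. Write $P_k$ for the good set of index $k$ that Theorem~\ref{lem1}(c) produces in a cube (your $G_k$), and write $G_k=\{(Du,V)\in S^r_k(\lambda)\}$ as in the paper. You use the bound $|P_k|\ge(1-\tau)\frac{\lambda}{\mu}|Q_j|$ on cubes centred in $G_k$. Summed over cubes covering $G_k$ up to a fraction $\epsilon''$, this only gives $(1-\tau)(1-\epsilon'')\frac{\lambda}{\mu}|G_k|$, which is strictly less than required. This multiplicative loss cannot be removed by bookkeeping.

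The missing observation is that with $q=1$, $\lambda'=\lambda$, $i=k$, \eqref{eq-d} together with \eqref{eq-1} gives the stronger bound
\[
|P_k|\ge(1-\tau)\Big[\frac{\lambda}{\mu}+\Big(1-\frac{\lambda}{\mu}\Big)(\mu-\delta_2)^{N-1}\delta_1\Big]|Q_j| .
\]
The strictly positive surplus $(1-\frac{\lambda}{\mu})(\mu-\delta_2)^{N-1}\delta_1$ absorbs $\tau$ and the covering losses, once these are chosen small depending on $\lambda$ and $\mu$. This is \eqref{small1} in the paper.

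\textbf{Two further points need to be stated, not just anticipated.}

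First, the covering error must be small relative to each nonempty $|G_k|$, not relative to $|G|$, and most of $G_k$ must be covered by cubes centred in $G_k$. The paper arranges this by choosing open $G_k'\subset G_k$ with $|\partial G_k'|=0$ and $|G_k\setminus G_k'|\le\epsilon'|G_k|$. It then sets $G_0'=G\setminus\bigcup_k\overline{G_k'}$ and tiles each $G_k'$ separately. Every cube in $G_k'$ is then centred in $G_k$. The cubes in $G_0'$ lie, up to a null set, in $F_0\cup\bigcup_k(G_k\setminus G_k')$, which is exactly what (g) needs.

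A single tiling of $G$ with uniformly small radii would also work, with no enlargement of $r$. As the radii shrink, the part of the open set $G_k$ lying in cubes centred outside $G_k$ has measure tending to $0$. Likewise, cubes centred in the relatively closed set $F_0$ have total measure at most $|F_0|$ plus an error that vanishes as the radii shrink. That fact is what justifies your claim in (g) that the remaining cubes contribute at most $C|F_0|+C\epsilon|G|$; as written, ``up to oscillation errors'' is not the right reason.

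Second, in (g) the perturbation is not zero on $P_k$. There $D\varphi_j$ is the first component of $X_k-Y_j=(\mu-\lambda)\kappa_k(\rho)\gamma_k(\rho)$. This is $O(\mu-\lambda)$, so your estimate still holds.
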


\begin{proof} We carry out the proof in several steps.

\medskip

{\em Step 1.} By Remark \ref{remk33}(iii), let  
\[
d'=\min\Big\{ \dist(\Sigma^{r}(\mu),\partial\Sigma^{s}(\mu)), \quad \min\limits_{1\le k\le N}\dist(S^{r}_k(\mu),\partial S_k^{s}(\mu))\Big\} >0.
\]

Define
\[
\begin{cases} G_k= \{x\in G: (D u(x),V(x)) \in S_k^{r}(\lambda)\}, \quad 1\le k\le N,\\
 I=\{1\le k\le N:\;\; G_k\ne \emptyset\}. \end{cases}
\]
By (P2) of Condition $O_N$,  the sets $\{G_k\}_{k\in I}$  are nonempty, open,  and pairwise disjoint.

For each $k\in I,$ let $G_k'\subset G_k$ be a nonempty open set such that
\begin{equation}\label{set-G'}
  |\partial G_k'|=0, \quad |G_k\setminus G_k'|\le \epsilon'\,|G_k|,
\end{equation}
where  $0<\epsilon'<1$ is  to be determined  later. Define
\[
G'_0= G\setminus \overline{\cup_{k\in I}  G_k'}= G\setminus \cup_{k\in I}  \overline{G_k'}.
\]
Then   the sets $\{G'_k\}_{k\in I\cup \{0\}}$ are  open and pairwise disjoint,  and
\begin{equation}\label{null-G'}
 |G\setminus\cup_{k\in I\cup \{0\}} G_k'|=0.
\end{equation}

\medskip

{\em Step 2.}  Let $\bar y\in G;$ then $Y=Y_{\bar y} =(Du (\bar y), V(\bar y)) \in \Sigma^{r}(\lambda),$ so 
\[
Y\in L_i^{r}(\lambda')
\]
 for some (perhaps not unique)  $1\le i=i_{\bar y} \le N$ and $\delta_2\le\lambda'=\lambda'_{\bar y}\le \lambda.$ Thus,
\begin{equation}\label{Y-form}
Y=q\zeta_{i}(\lambda',\rho) +(1-q)\pi_{i}(\rho')
\end{equation}
for some  $0\le q=q_{\bar y} \le 1$ and $\rho=\rho_{\bar y}, \, \rho'=\rho'_{\bar y} \in \B_{r}$  with $\zeta_{i}(\lambda',\rho) -\pi_{i}(\rho')\in \Gamma.$

In particular, if $\bar y\in G_k'$ for some  $k\in I,$  then $Y \in S^{r}_{k}(\lambda)$; thus, $Y=\zeta_k(\lambda,\rho)$ for some $\rho\in\B_r.$  In this case, we set $q_{\bar y}=1,$  $i_{\bar y}=k,$ $\lambda'_{\bar y}=\lambda$ and $ \rho_{\bar y}= \rho'_{\bar y} =\rho.$

Let  $X_j=\zeta_j(\mu,\rho_{\bar y})$ and $X'_j=\zeta_j(\mu,\rho'_{\bar y})$ for all $1\le j\le N.$

Applying Theorem \ref{lem1}  with $Y=Y_{\bar y}$ and  $\tau=\epsilon'$  on  the cube $Q=Q_{\bar y,l},$  where $0<l<\ell_{\bar y}:=\sup\{l>0 : Q_{\bar y,l}\subset G\},$  we obtain   $(\varphi, \Psi)=(\varphi_{\bar{y},l}, \Psi_{\bar{y},l})\in C^\infty_0(Q;\R^m\times\R^{m\times n})$  such that
\begin{itemize}
\item[(i)] $\dv\Psi=0$  and $Y+(D\varphi,  \Psi)\in \Sigma^{r}(\mu)$ on $\bar Q;$
\item[(ii)]    $\|\varphi\|_{L^\infty(Q)} <\epsilon';$
\item[(iii)]  there exist pairwise disjoint open  sets $P'_1,\dots, P'_N,   P''_1,\dots, P''_N\subset\subset Q$ such that
\begin{equation}\label{eq-d2}
\begin{cases}
 Y+(D\varphi,  \Psi)=\chi_{P'_j} X_j+\chi_{P''_j} X_j' \quad \text{in $P_j=P'_j\cup P''_j$} & \forall\, 1\le j\le N,\\
|P_i|\ge  (1-\epsilon') \big [(1-q) \nu_i^i(\mu,\rho') +q  \big (\frac{\lambda'}{\mu} +(1-\frac{\lambda'}{\mu})  \nu_i^i(\mu,\rho)\big )\big  ] |Q|,\\
|P_j| \ge (1-\epsilon')  \big [(1-q) \nu_i^j(\mu,\rho') +q  (1-\frac{\lambda'}{\mu})  \nu_i^j(\mu,\rho) \big ] |Q| &\forall\, 1\le j\le N.
\end{cases}
\end{equation}
In particular,  $\sum_{j=1}^N |P_j|\ge  (1-\epsilon')|Q|.$
\end{itemize}

By (i) and (iii), we have
\begin{equation}\label{dist-1}
\begin{cases} \dist(Y+(D\varphi,  \Psi),\partial\Sigma^{s}(\mu))\ge d' &\mbox{on $\bar Q,$}  \\
\dist (Y+(D\varphi,  \Psi),\partial S_j^{s}(\mu))\ge d' &\mbox{in $P_j$} \quad \forall\, 1\le j\le N.
\end{cases}
\end{equation}
Moreover, since $\delta_2\le\lambda'\le\lambda$ and $1-\frac{q\lambda'}{\mu} \ge \mu-\lambda,$ it follows from (\ref{eq-1}) that
\begin{equation}\label{eq-e}
\begin{split}
|P_i| &\ge  (1-\epsilon') \Big [ \frac{q\lambda'}{\mu} +\Big (1-\frac{q\lambda'}{\mu}\Big )  (\mu-\delta_2)^{N-1}\delta_1 \Big  ] |Q|,   \\
|P_j|
& \ge (1-\epsilon')  (\mu-\lambda)  (\mu-\delta_2)^{N-1}\delta_1 |Q| \qquad \forall\, 1\le j  \le N.
\end{split}
\end{equation}

Define  $\tilde u  =u_{\bar y,l}=  u + \varphi$ and $\tilde  V  =V_{\bar y,l}=  V +  \Psi$ on $\bar Q=\bar Q_{\bar y,l}.$  Then 
\begin{equation}\label{new-fun0}
\begin{cases}
(\tilde u,\tilde V) \in  (u,V) +C_0^\infty(Q;\R^m\times\R^{m\times n}),\\
  \mbox{$\dv \tilde  V$  in $Q,$}\\
  \|\tilde u-u\|_{L^\infty(Q)} < \epsilon'.
  \end{cases}
\end{equation}

By the uniform continuity of $(Du,V)$ on $\bar G,$  we select  $\ell'>0$ such that
\[
 |(D u(y),V(y))-(D u(y'),V(y'))|<\min\Big\{\frac{d'}{4},\epsilon'\Big\} \quad \forall\, y,y'\in \bar G,\;\;
 |y-y'|<\ell'.
 \]
Let $0<l<\min\{\ell_{\bar y},\ell'\}$.  Then, for all $y\in \bar Q=\bar Q_{\bar y,l},$
\[
  |(D\tilde  u(y),   \tilde V(y)) - [Y+(D\varphi(y),  \Psi(y))] |  = |(D u(y),V(y))-(D u(\bar y),V(\bar y))|< \min\Big\{\frac{d'}{4},\epsilon'\Big\}. 
\]

Let $\epsilon'>0$ be such that
\begin{equation}\label{choice-e'-1}
\epsilon'<\min\Big\{\frac{d'}{4},\frac{\epsilon}{2}\Big\}.
\end{equation}
Then, by  (i),  (iii) and (\ref{dist-1}), for all $0<l<\min\{\ell_{\bar y},\ell'\}$ with $Q=Q_{\bar{y},l}$, we have
\begin{equation}\label{cubes}
 \begin{cases}
 (D u, V)\big |_{\bar Q} \in \B_\epsilon(Y), \\
  (D\tilde  u,  \tilde  V)\big |_{\bar Q} \in \Sigma^{s}(\mu),  \\
  (D\tilde  u,  \tilde  V)\big |_{P_j} \in  S_j^{s}(\mu)\cap [\B_\epsilon (X_j) \cup \B_\epsilon (X'_j)]   \quad  \forall\, 1 \le j \le N,\\
  | Q\cap \{(D\tilde u, \tilde V)\in S^{s}(\mu)\}|\ge |\cup_{j=1}^N P_j|\ge (1-\epsilon')|Q|.
  \end{cases}
\end{equation}

\medskip

{\em Step 3.}   Let $k\in I_0:=I\cup\{0\}$. Choose a sequence $\{Q^k_\nu\}_{\nu=1}^\infty$ of disjoint cubes in $G_k'$ with $0<\mathrm{rad}(Q^k_\nu)<\min\{\ell',\epsilon\}$ for all $\nu\ge1$ such that
\[
\big|G_k'\setminus\cup_{\nu=1}^\infty Q^k_\nu\big|=0.
\]
Pick a large integer $m_k\ge1$ so that
\begin{equation}\label{choice-Qs}
\big|G_k'\setminus\cup_{\nu=1}^{m_k} Q^k_\nu\big|\le\epsilon'|G_k'|.
\end{equation}
For each  $\nu\ge 1,$ let $\bar y^k_\nu$ denote the center of $Q^k_\nu$ and $l^k_\nu=\mathrm{rad}(Q^k_\nu)$.

Following the constructions in Step 2, we define
\begin{equation}\label{final-fun}
(\tilde  u,\tilde V)  =    \sum_{k\in I_0} \sum_{\nu=1}^{m_k} (u_{\bar{y}^k_\nu,l^k_\nu}, V_{\bar{y}^k_\nu,l^k_\nu}) \chi_{ Q^k_\nu}+ (u,V)\chi_{G\setminus G'}\quad\mbox{in $G$},
\end{equation}
where $G'=\bigcup_{k\in I_0}\bigcup_{\nu=1}^{m_k}  Q^k_\nu.$
Then $(\tilde u,\tilde V) \in  (u,V) +C_0^\infty(G;\R^m\times\R^{m\times n})$ and
\[
|G\setminus G'|=\sum_{k\in I_0}|G_k'\setminus\cup_{\nu=1}^{m_k} Q^k_\nu|\le\epsilon'|G|<\epsilon|G|.
\]

We relabel the cubes $Q^k_\nu$ $(k\in I_0,\,1\le\nu\le m_k)$ as $Q_1,\ldots,Q_M.$ Then requirement (a) is satisfied.
From (\ref{strict-sub}), (\ref{new-fun0}), (\ref{choice-e'-1}), (\ref{cubes}) and (\ref{final-fun}), requirements (b) and (c) also follow.

Moreover, using (\ref{null-G'}), (\ref{choice-e'-1}), (\ref{cubes}) and (\ref{choice-Qs}), we have
\[
\begin{split}
 |\{(D\tilde u, \tilde V) &\in S^{s}(\mu)\}|  = \sum_{k\in I_0}|G_k' \cap\{(D\tilde u, \tilde V)\in S^{s}(\mu)\}|\\
& = \sum_{k\in I_0}\sum_{\nu=1}^\infty|Q^k_\nu \cap\{(D\tilde u, \tilde V)\in S^{s}(\mu)\}|  \ge \sum_{k\in I_0}\sum_{\nu=1}^{m_k}|Q^k_\nu \cap\{(D\tilde u, \tilde V)\in S^{s}(\mu)\}|\\
& \ge(1-\epsilon')\sum_{k\in I_0}\sum_{\nu=1}^{m_k}| Q^k_\nu|  =(1-\epsilon')\sum_{k\in I_0}\big(|G_k'|- |G'_k\setminus \cup_{\nu=1}^{m_k} Q^k_\nu|\big)\\
& \ge (1-\epsilon')^2 \sum_{k\in I_0}|G_k'|=(1-\epsilon')^2|G|>(1-\epsilon)|G|.
\end{split}
\]
Hence, requirements (d) and (e) are satisfied.

\medskip

{\em Step 4.}  We  now verify  requirement  (f).
First,  from the second  of  (\ref{eq-e}) and third of (\ref{cubes}), we have that  for each $1\le j \le N,$
\[
 \begin{split}
  |\{(D\tilde u, \tilde V)   \in S_j^{s}(\mu)\}|  & \ge \sum_{k\in I_0}\sum_{\nu=1}^{m_k}|Q^k_\nu \cap\{(D\tilde u, \tilde V)\in S_j^{s}(\mu)\}|  \\
  & \ge (1-\epsilon')  (\mu-\lambda)(\mu-\delta_2)^{N-1}\delta_1 \sum_{k\in I_0} \sum_{\nu=1}^{m_k} |Q_\nu^k| \\
  & \ge  (1-\epsilon')^2    (\mu-\lambda)(\mu-\delta_2)^{N-1}\delta_1  |G|  \\ &  \ge  \frac12 (\mu-\lambda)(\mu-\delta_2)^{N-1}\delta_1  |G|, \end{split}
\]
where $\epsilon'\in(0,1)$ is chosen to satisfy, in addition to (\ref{choice-e'-1}),
\begin{equation}\label{choice-e'-2}
 (1-\epsilon')^2\ge 1/2.
\end{equation}
This verifies  the first inequality of (f).

Next, let $1\le k\le N.$ If $k\notin I,$ then 
\[
G_k=\{y\in G: (Du(y),V(y))\in S^r_k(\lambda)\}=\emptyset.
\]
 Thus,  the second inequality of (f)  is automatically satisfied.
Now assume $k\in I;$ then
\[
\bar{y}^k_\nu\in Q^k_\nu\subset G_k'\subset G_k\ne\emptyset \qquad (1\le \nu\le m_k).
\]
Hence,  in  \eqref{Y-form} of  Step 2, we have
\[
 q=1,\quad i=k, \quad \lambda'=\lambda, \quad  \rho=\rho'\in\B_r.
\]

 From (\ref{set-G'}),  the first of \eqref{eq-e},  the third of (\ref{cubes}),  and (\ref{choice-Qs}), it follows that
 \[
 \begin{split} |\{(D\tilde u, \tilde V)\in S_k^{s}(\mu)\}|
 &  \ge   \sum_{j\in I_0}\sum_{\nu=1}^{m_j}  | Q_\nu^j \cap \{(D  \tilde u,  \tilde V)\in S_k^{s}(\mu)\}|   \\
 & \ge   \sum_{\nu=1}^{m_k}  | Q_\nu^k \cap \{(D  u_{\bar{y}^k_\nu,l^k_\nu}, V_{\bar{y}^k_\nu,l^k_\nu})\in S_k^{s}(\mu)\}|  \\
 &  \ge  (1-\epsilon') \Big [\frac{\lambda}{\mu} +\Big (1-\frac{\lambda}{\mu}\Big )(\mu-\delta_2)^{N-1} \delta_1\Big ] \sum_{\nu=1}^{m_k} |Q_\nu^k|\\
 &   \ge (1-\epsilon')^2 \Big [ \frac{\lambda}{\mu} +\Big (1-\frac{\lambda}{\mu}\Big )(\mu-\delta_2)^{N-1}\delta_1 \Big ] |G_k'|\\
 &  \ge (1-\epsilon')^3 \Big  [ \frac{\lambda}{\mu} +\Big (1-\frac{\lambda}{\mu}\Big )(\mu-\delta_2)^{N-1}\delta_1 \Big ]  |G_k|. \end{split}
 \]
Therefore,  the second inequality of (f) is ensured  if   $\epsilon'\in (0,1)$ is chosen to further satisfy, along with (\ref{choice-e'-1}) and (\ref{choice-e'-2}), that
 \begin{equation}\label{small1}
 (1-\epsilon')^3 \Big [ \frac{\lambda}{\mu} +\Big (1-\frac{\lambda}{\mu}\Big )(\mu-\delta_2)^{N-1}\delta_1 \Big ]\ge \frac{\lambda}{\mu},
 \end{equation}
   which is possible since  $\big (1-\frac{\lambda}{\mu}\big )(\mu-\delta_2)^{N-1}\delta_1>0.$

\medskip

{\em Step 5.}  Finally, we verify  requirement (g).
 In the following, we denote by $C$ any constant  depending  on  $\delta_2$ and the diameter of the set $\Sigma(1).$
 
Note from (\ref{set-G'}), (\ref{choice-e'-1}) and (\ref{choice-Qs})  that
 \begin{equation}\label{f0}
 \begin{split}
  \|D\tilde u -Du\|_{L^1(G)}  = &\,  \|D\tilde u-Du\|_{L^1(F_0)} + \sum_{k\in I}  \|D\tilde u-Du\|_{L^1(G_k\setminus G_k')}
\\
& \, + \sum_{k\in I}  \|D\tilde u-Du\|_{L^1(G_k'\setminus \cup_{\nu=1}^{m_k}Q^k_\nu)} + \sum_{k\in I}\sum_{\nu=1}^{m_k}  \|D\tilde u-Du\|_{L^1(Q^k_\nu)}\\
\le &\, C(|F_0| + \epsilon'|G|)  + \sum_{k\in I}\sum_{\nu=1}^{m_k} \int_{Q^k_\nu} |(D\tilde u,\tilde V)-(D u,V)|
 \\
 \le &\, C(|F_0| + \epsilon |G|)  + \sum_{k\in I} \sum_{\nu=1}^{m_k}\int_{ P^k_{\nu}}|(D\tilde u, \tilde V) -(D u,V)|
 \\
&  \, + \sum_{k\in I}\sum_{\nu=1}^{m_k} \int_{Q^k_\nu\setminus  P^k_{\nu}}|(D\tilde u, \tilde V)-(D u, V)|, \end{split}
\end{equation}
where  $P^k_{\nu}\subset\subset Q_\nu^k=Q_{\bar y^k_\nu,l^k_\nu}$ ($k\in I$,  $1\le \nu\le m_k$) are the sets defined as in Step 2, with
\[
Y= Y^k_\nu= Y_{\bar{y}^k_\nu}= (Du(\bar{y}^k_\nu),V(\bar{y}^k_\nu)) \in S_k^r(\lambda), \quad Q=Q^k_\nu, \quad 
P_i=P^k_\nu.
\]
Hence,  in \eqref{Y-form}, we have   $q=1$, $i=k$,  $\lambda'=\lambda$ and $\rho=\rho'=\rho_{\bar{y}^k_\nu}\in \B_r.$

By (\ref{small1}), we have
\[
  (1-\epsilon')  \Big [ \frac{\lambda}{\mu} +\Big (1-\frac{\lambda}{\mu}\Big ) (\mu-\delta_2)^{N-1}\delta_1 \Big ]  >  \frac{\lambda}{\mu},
\]
which, by  the first of \eqref{eq-e},  implies that $| P^k_{\nu}|>\frac{\lambda}{\mu}| Q_\nu^k|.$  Hence,
\[
 |Q^k_\nu\setminus P_\nu^k| <\frac{\mu-\lambda}{\mu}|Q^k_\nu|< \frac{\mu-\lambda}{\delta_2}|Q^k_\nu|,
\]
so  
\begin{equation}\label{f2}
  \int_{Q^k_\nu\setminus  P^k_{\nu}}  |(D\tilde u,  \tilde V) -(D u, V)|  \le C|Q^k_\nu\setminus  P_\nu^k|\le C(\mu-\lambda)|Q^k_\nu|.
\end{equation}
Moreover, let  
\[
X^k_{\nu}=\zeta_k(\mu,\rho) \in S_k^{r}(\mu)
\]
be defined as in Step 2 corresponding to  $Y=Y^k_\nu$  with $\rho=\rho_{\bar{y}^k_\nu}\in \B_r.$  Then
\[
|X_\nu^k-Y^k_\nu|=(\mu-\lambda)|\xi_k(\rho_{\bar{y}^k_\nu})-\pi_k(\rho_{\bar{y}^k_\nu})|\le C(\mu-\lambda).
\]
Thus, by (\ref{cubes}),  we have
\begin{equation}\label{f1}
 \begin{split}
 \int_{ P_\nu^k}  & |(D\tilde u,  \tilde V)  -(D u,V)| \,dx \\
 &  \le  \int_{ P^k_\nu} \big (|(D\tilde u, \tilde V)-X_\nu^k|+|X_\nu^k-Y^k_\nu|+|(D u,V)-Y^k_\nu|\big)\\
 & \le C(\epsilon+(\mu-\lambda))|Q^k_\nu|. \end{split}
  \end{equation}
Inserting (\ref{f2}) and (\ref{f1}) into (\ref{f0}) proves  requirement (g).

\medskip

This completes the proof.
\end{proof}

%%%%%%%%%%%%%%%

\subsection{Constructions of $\U_\nu$ and $(u_\nu,V_\nu)$}

Let   $N\ge 2$ and let 
\[
\sigma\colon\R^{m\times n}\to \R^{m\times n}
\]
  be locally Lipschitz  and satisfy Condition $O_N.$  
  Assume $u\in C^1(\bar \Omega;\R^m)$ and  $V \in C(\bar \Omega; \R^{m\times n})$ satisfy 
\[
 \dv V=0 \; \text{ in $\Omega$}, \quad (Du, V)\in   \Sigma(1) \; \text{ on  $\bar \Omega.$}
\]

Since the set 
\[
\Delta=(D\bar u, \bar V)(\bar\Omega )\subset \Sigma(1)
\]
 is compact, by  Remark  \ref{remk33}(iii), we have
 \begin{equation}\label{subs-2}
\Delta=(D u_0,V_0)(\bar\Omega ) \subset  \Sigma^{r_1}(\lambda_1),
\end{equation}
for some  $ r_1 \in (0, r_0)$ and  $\lambda_1 \in [ \delta_2 , 1)$,   
where 
\[
(u_0,V_0):=(\bar u,\bar V) \; \text{ in $\bar\Omega.$}
\]

Let $0<\delta<1.$ Define the sequences $\{\lambda_\nu\}_{\nu=2}^\infty$, $\{r_\nu\}_{\nu=2}^\infty$, and $\{\epsilon_\nu\}_{\nu=1}^\infty$ as follows:
\begin{equation}\label{select-1}
\lambda_{\nu+1} = \frac{1}{2} (1 + \lambda_\nu),\quad r_{\nu+1} = \frac{1}{2} (r_0 + r_\nu),\quad  \epsilon_\nu = \frac{\delta}{3^\nu}
\qquad \forall\, \nu = 1,2,\ldots.
\end{equation}
Then
\[
\delta_2 \le \lambda_1 < \lambda_2 < \cdots < 1, 
\quad 
0 < r_1 < r_2 < \cdots < r_0,
\]
and
\[
\lim_{\nu \to \infty} \lambda_\nu = 1,
\quad
\lim_{\nu \to \infty} r_\nu = r_0.
\]

We now define the open sets
\begin{equation}\label{set-U}
\mathcal{U}_\nu = \Sigma^{r_{\nu+1}}\!\bigl(\lambda_{\nu+1}\bigr), \quad \forall\, \nu \ge 1,
\end{equation}
and proceed to construct the  corresponding sequence 
\[
\{(u_\nu,V_\nu)\}_{\nu=1}^\infty \subset (\bar u, \bar V) + C^\infty_0(\Omega; \R^m \times \R^{m \times n}),
\]
 required in \eqref{pdr2},  inductively as follows.  

\medskip

We begin with \eqref{subs-2}. Applying the main stage Theorem~\ref{thm1} to $(u,V)=(u_0,V_0)$ on the set $G=\Omega$, with
\[
\lambda=\lambda_1,\quad 
\mu=\lambda_2,\quad  
r=r_1,\quad 
s=r_2,\quad  
\epsilon=\epsilon_1,
\]
we obtain a function $(u_1,V_1)=(\tilde u,\tilde V)\in (u_0,V_0)+C_0^\infty(\Omega;\R^m\times \R^{m\times n})$ and finitely many pairwise disjoint cubes $Q^1_1,\ldots,Q^1_{M_1}\subset \Omega$ such that
\[
\bigl|\Omega \setminus \textstyle\bigcup_{j=1}^{M_1} Q^1_j\bigr| < \epsilon_1 |\Omega|,
\]
and the following properties hold:
\begin{equation}\label{u1}
\begin{cases}
0<\rad(Q^1_j)<\epsilon_1\;\;\forall\, 1\le j\le M_1;  \\
\mbox{$\dv V_1=0$ in $\Omega$, \; $(D u_1,  V_1)\in \U_1$  on $\bar \Omega $;}\\
 \|u_1-u_0\|_{L^\infty(\Omega )}  <\epsilon_1;  \\
|Q^1_j\cap \{(D u_1,V_1) \in S^{r_2}(\lambda_2) \}| \ge (1-\epsilon_1)|Q^1_j| \quad \forall\, 1\le j\le M_1;\\
|\{(D u_1,V_1) \in S^{r_2}(\lambda_2)\}| \ge (1-\epsilon_1)|\Omega |; \\
\mbox{$
\begin{cases}
 |\{(D u_1,V_1) \in S_k^{r_2}(\lambda_2)\}| \ge \frac12 (\lambda_2-\lambda_1)(\lambda_2-\delta_2)^{N-1}\delta_1|\Omega |, \\
|\{(D u_1,V_1) \in S_k^{r_2}(\lambda_2) \}| \ge \frac{\lambda_1}{\lambda_2} |\{(D u_0,V_0) \in S_k^{r_1}(\lambda_1) \}|\;\; \forall\, 1\le k\le N;
\end{cases} $} \\
\|Du_1 - Du_0\|_{L^1(\Omega )}\le C_0\big[|F_0|+(\epsilon_1+(\lambda_2-\lambda_1))|\Omega |\big],\\
\mbox{where $F_0=\{(D u_0,V_0) \not\in S^{r_1}(\lambda_1) \}.$}   
\end{cases}
\end{equation}
Here, we assume further that
\begin{equation}\label{u1-0}
C_0\ge\mathrm{diam}(\Sigma(1)).
\end{equation}

Let $Q^1_0=\Omega \setminus\cup_{j=1}^{M_1}\bar{Q}^1_j.$ Then $Q^1_0$ is an open subset of $\Omega,$ with
\begin{equation}\label{u1-1}
 |Q^1_0|<\epsilon_1|\Omega |, \quad |\Omega \setminus\cup_{j=0}^{M_1}Q^1_j|=0.
\end{equation}
 
 For each $0\le i \le M_1,$ we  apply Theorem \ref{thm1} to  $(u,V)=(u_1,V_1)$ on the set $G=Q^1_i$, with
\[
 \lambda=\lambda_2, \quad  \mu=\lambda_3,\quad r=r_2,\quad s=r_3, \quad  \epsilon=\epsilon_2,
 \]
to obtain a function 
\[
(u_i^2,V_i^2)= (\tilde u,\tilde v)\in (u_1,V_1)+C^{\infty}_0(Q^1_i;\R^m\times \R^{m\times n})
\]
 and finitely many disjoint cubes 
 \[
 Q^2_{i,1},\ldots,Q^2_{i,M^2_i}\subset Q^1_i, \; \text{ with } \; |Q^1_i\setminus\cup_{j=1}^{M^2_i}Q^2_{i,j}|<\epsilon_2|Q^1_i|,
 \]
such that
\begin{equation}\label{u2}
\begin{cases}
0<\rad(Q^2_{i,j})<\epsilon_2\;\;\forall\,1\le j\le M^2_i;  \\
\mbox{$ \dv V^2_i=0$ in $Q^1_i$, \; $(D u^2_i,  V^2_i)\in \U_2$  on $\bar Q^1_i$;}\\
\|u^2_i-u_1\|_{L^\infty(Q^1_i)}  <\epsilon_2; \\
|Q^2_{i,j}\cap \{(D u^2_i,V^2_i) \in S^{r_3}(\lambda_3) \}| \ge (1-\epsilon_2)|Q^2_{i,j}| \quad \forall\, 1\le j\le M^2_i;\\
|Q^1_i\cap\{(D u^2_i,V^2_i) \in S^{r_3}(\lambda_3) \}| \ge (1-\epsilon_2)|Q^1_i|; \\
\begin{cases}
 |Q^1_i\cap\{(D u^2_i,V^2_i) \in S_k^{r_3}(\lambda_3)\}| \\ \quad \ge \frac{1}{2}(\lambda_3-\lambda_2)(\lambda_3-\delta_2)^{N-1}\delta_1|Q^1_i|, \\
|Q^1_i\cap\{(D u^2_i,V^2_i) \in S_k^{r_3}(\lambda_3) \}| \\ \quad \ge \frac{\lambda_2}{\lambda_3} |Q^1_i\cap\{(D u_1,V_1) \in S_k^{r_2}(\lambda_2) \}|
\end{cases}   (1\le k\le N);\\
\|Du^2_i - D u_1\|_{L^1(Q^1_i)}\le C_0\big[|F^1_i|+(\epsilon_2+(\lambda_3-\lambda_2))|Q^1_i|\big], \\
 \mbox{where $F^1_i=Q^1_i\cap\{(D u_1,V_1) \not\in S^{r_2}(\lambda_2) \}.$ }\\
\end{cases}
\end{equation}

By the fourth property in \eqref{u1}, we have
\[
|F^1_i| < \epsilon_1 |Q^1_i| \quad \text{for } 1 \le i \le M_1.
\]
If $i=0$, then by \eqref{u1-1} we obtain
\begin{equation}\label{u2-1}
|F^1_0| \le |Q^1_0| < \epsilon_1 |\Omega|.
\end{equation}

Let $Q^{2}_{i,0}=Q^1_i\setminus\cup_{j=1}^{M^2_i}\bar{Q}^2_{i,j}$; then $Q^2_{i,0}$ is an open subset of $Q^1_i,$ with
\[
 |Q^{2}_{i,0}|<\epsilon_2|Q^1_i|, \quad |Q^1_i\setminus\cup_{j=0}^{M^2_i} Q^2_{i,j}|=0.
\]

We  relabel the  open sets  $Q^2_{i,j}$ $(0\le i\le M_1,\,0\le j\le M^2_i)$ as  $Q^2_0,Q^2_1,\ldots,Q^2_{M_2}$ by setting
\[
M_2=\sum_{i=0}^{M_1}(M^2_i+1)-1, \quad Q^2_{\mu}=Q^2_{i,j},  \; \text{ where } \; 
 \mu=i+j+\sum_{k=1}^i M^2_{k-1}, 
\]
for all $0\le i\le M_1$ and $0\le j\le M^2_i.$ Here we adopt the convention that $\sum_{k=1}^0 M^2_{k-1}=0.$
Moreover,  $\big|\Omega \setminus\bigcup_{j=0}^{M_2}Q^2_j\big|=0,$ and for all $0\le i\le M_1,$
\[  
\bigg|Q^1_i\setminus\bigcup_{j=0}^{M^2_i}Q^2_{i+j+\sum_{k=1}^i M^2_{k-1}}\bigg|=0,\quad \big|Q^2_{i+\sum_{k=1}^i M^2_{k-1}}\big|<\epsilon_2|Q^1_i|.
 \]

Define
\[
(u_2,V_2)=\sum_{i=0}^{M_1}(u^2_i,V^2_i)\chi_{Q^1_i}\in (u_0,v_0)+C^{\infty}_0(\Omega ;\R^m\times \R^{m\times n}).
\]
Then
\[
\begin{cases}
0<\rad(Q^2_{j})<\epsilon_2\;\;\mbox{$\forall\, 0\le j\le M_2$ with $j\ne i+\sum_{k=1}^i M^2_{k-1}$ $(0\le i\le M_1)$;}  \\
\mbox{$ \dv V_2=0$ in $\Omega$,  \;  $(D u_2,  V_2)\in \U_2$  on $\bar \Omega $;}\\
\|u_2-u_1\|_{L^\infty(\Omega )}  <\epsilon_2; \\
|Q^2_{j}\cap \{(D u_2,V_2) \in S^{r_3}(\lambda_3) \}| \ge (1-\epsilon_2)|Q^2_{j}| \\
\quad \forall\, 0\le j\le M_2 \;\;  \mbox{with $j\ne i+\sum_{k=1}^i M^2_{k-1}$ $(0\le i\le M_1)$;}\\
|\{(D u_2,V_2) \in S^{r_3}(\lambda_3) \}| \ge (1-\epsilon_2)|\Omega |; \\
\begin{cases}
 |Q^1_i\cap\{(D u_2,V_2) \in S_k^{r_3}(\lambda_3) \}| \ge \frac{1}{2}(\lambda_3-\lambda_2)(\lambda_3-\delta_2)^{N-1}\delta_1|Q^1_i|, \\
|Q^1_i\cap\{(D u_2,V_2) \in S_k^{r_3}(\lambda_3) \}| \ge \frac{\lambda_2}{\lambda_3} |Q^1_i\cap\{(D u_1,V_1) \in S_k^{r_2}(\lambda_2) \}|,\\
\mbox{for each $1\le k\le N$ and $0\le i\le M_1$};
\end{cases} \\
\|Du_2 - D u_1\|_{L^1(\Omega )}\le  C_0\big[2\epsilon_1+\epsilon_2+(\lambda_3-\lambda_2)\big]|\Omega |.
\end{cases}
\]
Here, the last inequality follows from  (\ref{u1-0}), (\ref{u2}) and (\ref{u2-1}), since 
\[
\begin{split}
\|Du_2 - D u_1   \|_{L^1(\Omega )}&=\sum_{j=0}^{M_1} \|Du_2 - D u_1\|_{L^1(Q^1_j)}\\
&= \|Du_2 - D u_1\|_{L^1(Q^1_0)} +\sum_{j=1}^{M_1} \|Du_2 - D u_1\|_{L^1(Q^1_j)}\\
& \le C_0 |Q^1_0|+\sum_{j=1}^{M_1} C_0 \big[|F^1_j|+(\epsilon_2+(\lambda_3-\lambda_2))|Q^1_j|\big] \\
& \le C_0\epsilon_1|\Omega |+C_0[ \epsilon_1+\epsilon_2+(\lambda_3-\lambda_2)]  \sum_{j=1}^{M_1}  |Q^1_j| \\
& \le  C_0 \big[2\epsilon_1+\epsilon_2+(\lambda_3-\lambda_2)\big]|\Omega |.
\end{split}
\]

Repeating this process indefinitely, we obtain a sequence 
\[
\{(u_\nu, V_\nu)\}_{\nu=1}^\infty \subset 
(u_0,V_0)+C^{\infty}_0(\Omega ;\R^m\times \R^{m\times n})
\]
and finitely many disjoint open sets $Q^\nu_0,Q^\nu_1,\ldots,Q^\nu_{M_\nu}\subset\Omega $ with 
\[
|\Omega \setminus\cup_{j=0}^{M_\nu}Q^\nu_j|=0\quad \forall\, \nu=1,2,\ldots, 
\]
satisfying that,
for each $\nu\ge2$,
\[
M_\nu=\sum_{i=0}^{M_{\nu-1}}(M^\nu_i +1)-1,
\]
that all  sets $Q^\nu_j$, except those with  $j =i+\sum_{k=1}^i M^\nu_{k-1}$ for $0\le i\le M_{\nu-1}$, are  cubes in $\R^{n}$, 
 and that, for each $\nu\ge2$ and $0\le i\le M_{\nu-1},$
\begin{equation}\label{u-nu-0}
\Big |Q^{\nu-1}_i \setminus\bigcup_{j=0}^{M^\nu_i} Q^{\nu}_{i+j+\sum_{k=1}^i M^\nu_{k-1}} \Big |=0, \quad \Big |Q^{\nu}_{i+\sum_{k=1}^i M^\nu_{k-1}}\Big |<\epsilon_\nu|Q^{\nu-1}_i|.
\end{equation}

Moreover, for each $\nu\ge2$,
\begin{equation}\label{u-nu}
\begin{split}
& 0<\rad(Q^{\nu}_{j})<\epsilon_{\nu}\;\;\mbox{$\forall\, 0\le j\le M_{\nu}$,  $j\ne i+\sum_{k=1}^i M^\nu_{k-1}$ $(0\le i\le M_{\nu-1})$;}  
\\ &
 \mbox{$\dv V_{\nu}=0$ in $\Omega$, \;  $(D u_{\nu},  V_{\nu})\in \U_\nu$  on $\bar \Omega $;}\\ &
 \|u_{\nu}-u_{\nu-1}\|_{L^\infty(\Omega )}  <\epsilon_{\nu}; \\ &
 |Q^{\nu}_{j}\cap \{(D u_{\nu},V_{\nu}) \in S^{r_{\nu+1}}(\lambda_{\nu+1}) \}| \ge (1-\epsilon_{\nu})|Q^{\nu}_{j}|   \\
 & \mbox{ $\forall\, 0\le j\le M_{\nu},  \;\; j\ne i+\sum_{k=1}^i M^\nu_{k-1} \quad (0\le i\le M_{\nu-1});$}  \\ &
 |\{(D u_{\nu},V_{\nu}) \in S^{r_{\nu+1}}(\lambda_{\nu+1}) \}| \ge (1-\epsilon_{\nu})|\Omega |; \\ &
 \begin{cases}
 |Q^{\nu-1}_i\cap\{(D u_{\nu},V_{\nu}) \in S_k^{r_{\nu+1}}(\lambda_{\nu+1}) \}| \\ \quad \ge \frac{1}{2}(\lambda_{\nu+1}-\lambda_{\nu})(\lambda_{\nu+1}-\delta_2)^{N-1}\delta_1|Q^{\nu-1}_i|, \\
|Q^{\nu-1}_i\cap\{(D u_{\nu},V_{\nu}) \in S_k^{r_{\nu+1}}(\lambda_{\nu+1}) \}| \\  \quad \ge \frac{\lambda_{\nu}}{\lambda_{\nu+1}} |Q^{\nu-1}_i\cap\{(D u_{\nu-1},V_{\nu-1}) \in S_k^{r_{\nu}}(\lambda_{\nu}) \}|,
\end{cases}  \forall\, 1\le k\le N, \;\; 0\le i\le M_{\nu-1};  \\ &
 \|Du_{\nu} - D u_{\nu-1}\|_{L^1(\Omega )}\le  C_0 \big[2\epsilon_{\nu-1}+\epsilon_{\nu}+(\lambda_{\nu+1}-\lambda_{\nu})\big]|\Omega |.
\end{split}
\end{equation}

We have the following result; see also \cite[Lemma 4.2]{GKY26}.

\begin{lemma}\label{lem512} For  all $p>q\ge 1$, $0\le j\le M_q$ and  $1\le k\le N,$  we have
\[
 |Q^q_j  \cap\{( Du_{p} ,V_{p})  \in S_k^{r_{p+1}}(\lambda_{p+1}) \}|  \ge  \frac12\, \frac{\lambda_{q+2}}{\lambda_{p+1}}  (\lambda_{q+2}- \lambda_{q+1})(\lambda_{q+2}-\delta_2)^{N-1}\delta_1\,|Q^q_j|.
\]
\end{lemma}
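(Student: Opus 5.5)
The plan is to prove the bound by first obtaining it at step $p=q+1$ and then propagating it to all later steps $p>q+1$.

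\emph{Base step $p=q+1$.} By construction (see \eqref{u-nu} with $\nu=q+1$), the set $Q^q_j$ is exactly one of the open sets $Q^{\nu-1}_i$ on which Theorem~\ref{thm1} was applied at stage $\nu=q+1$, with $\lambda=\lambda_{q+1}$, $\mu=\lambda_{q+2}$, $r=r_{q+1}$ and $s=r_{q+2}$. The first inequality in the last block of \eqref{u-nu} then gives
\[
|Q^q_j\cap\{(Du_{q+1},V_{q+1})\in S_k^{r_{q+2}}(\lambda_{q+2})\}|\ge \tfrac12(\lambda_{q+2}-\lambda_{q+1})(\lambda_{q+2}-\delta_2)^{N-1}\delta_1|Q^q_j|.
\]
Since $\lambda_{q+2}/\lambda_{p+1}=1$ when $p=q+1$, this is precisely the claim in that case.

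\emph{Inductive step.} Suppose the claim holds for some $p\ge q+1$, and pass to $p+1$. By the nesting property \eqref{u-nu-0}, $Q^q_j$ coincides up to a null set with a disjoint union of sets $Q^{p}_i$ for $i$ in some index set $J\subset\{0,\dots,M_p\}$. Every $Q^p_i$ descends from exactly one $Q^{p-1}$-set, hence by iteration from exactly one $Q^q$-set, so this decomposition is unambiguous. We apply the second inequality of the last block in \eqref{u-nu}, with $\nu=p+1$, to each $Q^p_i$:
\[
|Q^p_i\cap\{(Du_{p+1},V_{p+1})\in S_k^{r_{p+2}}(\lambda_{p+2})\}|\ge \frac{\lambda_{p+1}}{\lambda_{p+2}}\,|Q^p_i\cap\{(Du_p,V_p)\in S_k^{r_{p+1}}(\lambda_{p+1})\}|.
\]
Summing over $i\in J$ and using the inductive hypothesis, we get the lower bound
\[
\frac{\lambda_{p+1}}{\lambda_{p+2}}\cdot\frac12\,\frac{\lambda_{q+2}}{\lambda_{p+1}}(\lambda_{q+2}-\lambda_{q+1})(\lambda_{q+2}-\delta_2)^{N-1}\delta_1|Q^q_j|.
\]
The factors of $\lambda_{p+1}$ cancel, leaving exactly the claimed constant with $\lambda_{q+2}/\lambda_{p+2}$.

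\emph{Main obstacle.} The delicate point is the bookkeeping of the nested partitions. We must check that (up to null sets) $Q^q_j=\bigcup_{i\in J}Q^p_i$ for every $p>q$, and that the measures add. This follows by induction from the first identity in \eqref{u-nu-0} together with the pairwise disjointness of the sets $Q^\nu_i$.

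We also need the second inequality in \eqref{u-nu} to hold for every $0\le i\le M_{\nu-1}$, including the non-cube remainder sets. This is guaranteed because Theorem~\ref{thm1} was applied on each $Q^{\nu-1}_i$, with $i=0$ included, and its conclusion (f) places no restriction on the shape of $G$.
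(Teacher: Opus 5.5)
Your proof is correct and follows the paper's argument. The paper likewise writes $Q^q_j$ (up to a null set) as a union of later-generation sets, telescopes the second inequality in the last block of \eqref{u-nu} from $p$ down to $q+1$, and finishes with the first inequality at stage $q+1$; your induction on $p$ is the same chain run forward, with the nesting bookkeeping and the remainder sets spelled out where the paper leaves them implicit.
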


\begin{proof}
From the construction of  $\{(u_{p},V_{p})\}_{p=1}^\infty$ and  $\{Q^{p}_j\}_{p\ge1,\,0\le j\le M_p}$, we  see that  if $p>q\ge 1$ and $0\le j\le M_q$, then
\[
Q^q_j=(\cup_{k\in I} Q^p_k)\cup E
\]
 for some index set $I\subset\{0,1,\ldots, M_p\}$ and  null-set $E.$
Thus,  for all $p>q\ge 1$, $0\le j\le M_q$ and $1\le k\le N,$   by  (\ref{u-nu}), we have
\[
\begin{split} |Q^q_j  \cap &\{( Du_{p},V_{p})  \in S_k^{r_{p+1}}(\lambda_{p+1}) \}| \\
&  \ge \frac{\lambda_p}{\lambda_{p+1}}\, |Q^q_j\cap\{(Du_{p-1},V_{p-1}) \in S_k^{r_{p}}(\lambda_p) \}|\\
& \ge \frac{\lambda_p}{\lambda_{p+1}} \frac{\lambda_{p-1}}{\lambda_{p}}\,|Q^q_j\cap\{(Du_{p-2},V_{p-2}) \in S_k^{r_{p-1}}(\lambda_{p-1}) \}|\\
&\hspace{30ex}  \vdots \\
& \ge \frac{\lambda_p}{\lambda_{p+1}}  \frac{\lambda_{p-1}}{\lambda_{p}}   \cdots  \frac{\lambda_{q+2}}{\lambda_{q+3}}\, |Q^q_j\cap\{(Du_{q+1},V_{q+1}) \in S_k^{r_{q+2}}(\lambda_{q+2}) \}| \\
& =\frac{\lambda_{q+2}}{\lambda_{p+1}}\, |Q^q_j\cap\{(Du_{q+1},V_{q+1}) \in S_k^{r_{q+2}}(\lambda_{q+2}) \}| \\
& \ge    \frac12 \, \frac{\lambda_{q+2}}{\lambda_{p+1}}  (\lambda_{q+2}- \lambda_{q+1})(\lambda_{q+2}-\delta_2)^{N-1}\delta_1\,|Q^q_j|.
\end{split}
\]
\end{proof}

The following   lemma is the same as \cite[Lemma 4.3]{GKY26}; the proof is therefore omitted.

\begin{lemma} \label{lem52} Let $f(\xi)= \sigma(\xi^1)-\xi^2 $ for $\xi=(\xi^1,\xi^2)\in \R^{m\times n}\times \R^{m\times n}.$
Then
\[
 |f(\xi)|\le C\,(1-\lambda) \quad \forall\, 0\le \lambda\le 1,\;\; \xi \in S^{r_0}(\lambda),
 \]
 where $C>0$ is a constant.
\end{lemma}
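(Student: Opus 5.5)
We need to prove Lemma \ref{lem52}: $|\sigma(\xi^1)-\xi^2|\le C(1-\lambda)$ for every $\xi\in S^{r_0}(\lambda)$ and every $0\le\lambda\le1$.

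The idea is that every point of $S^{r_0}(\lambda)$ lies on a segment of length comparable to $1-\lambda$ ending at a point of the graph $\mathcal K$, where $f$ vanishes. We then transport this smallness to $f(\xi)$ using the local Lipschitz continuity of $\sigma$ on a bounded set.

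First we parametrize. Take $\xi\in S^{r_0}(\lambda)$. By definition $\xi\in S_i^{r_0}(\lambda)$ for some $1\le i\le N$, so $\xi=\zeta_i(\lambda,\rho)=\lambda\xi_i(\rho)+(1-\lambda)\pi_i(\rho)$ for some $\rho\in\B_{r_0}$. Hence
\[
\xi-\xi_i(\rho)=(1-\lambda)\bigl(\pi_i(\rho)-\xi_i(\rho)\bigr)=-(1-\lambda)\kappa_i(\rho)\gamma_i(\rho).
\]
Next we collect uniform bounds. The maps $\kappa_i,\gamma_i$ are continuous on the compact set $\bar\B_{r_0}$, so $\xi_i,\pi_i$ are as well. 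Therefore $M:=\max_i\sup_{\bar\B_{r_0}}\bigl(|\xi_i|+|\pi_i|\bigr)<\infty$. It follows that $|\xi-\xi_i(\rho)|\le 2M(1-\lambda)$, and every $\xi\in S^{r_0}(\lambda)$, together with $\xi_i(\rho)$, lies in the fixed ball $\bar\B_{M}$ of $\R^{m\times n}\times\R^{m\times n}$.

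For the estimate itself, (P1)(i) gives $\xi_i(\rho)\in\mathcal K$, so $f(\xi_i(\rho))=0$. Since $\sigma$ is locally Lipschitz, it is Lipschitz with some constant $L$ on the compact ball $\{|A|\le M\}\subset\R^{m\times n}$. Both $\xi^1$ and $\xi_i^1(\rho)$ lie in this ball, so
\[
|f(\xi)|=|f(\xi)-f(\xi_i(\rho))|\le |\sigma(\xi^1)-\sigma(\xi_i^1(\rho))|+|\xi^2-\xi_i^2(\rho)|\le (L+1)|\xi-\xi_i(\rho)|\le 2M(L+1)(1-\lambda).
\]
This gives the lemma with $C=2M(L+1)$, independent of $\lambda$ and $\xi$.

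The only point needing care is that $C$ must be uniform. This comes from the compactness of $\bar\B_{r_0}$ and the continuity of $(\kappa_i,\gamma_i)$ required in Condition $O_N$. The local Lipschitz hypothesis on $\sigma$ from Theorem \ref{mainthm} is only ever applied on the bounded set identified above. Mere continuity of $\sigma$ would give $o(1)$ as $\lambda\to1$ but not the linear rate, so the Lipschitz assumption is exactly what is needed here.
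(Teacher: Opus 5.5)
Your proof is correct. You write $\xi=\zeta_i(\lambda,\rho)$, use $f(\xi_i(\rho))=0$ from (P1)(i) together with $\xi-\xi_i(\rho)=(1-\lambda)\bigl(\pi_i(\rho)-\xi_i(\rho)\bigr)$, and apply the Lipschitz bound for $\sigma$ on a fixed ball containing all relevant points; this gives exactly the claimed uniform estimate. The paper omits the proof and defers to \cite[Lemma~4.3]{GKY26}, and your argument is the standard one for this estimate.
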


\subsection{Proof of Theorem~\ref{mainthm}}

We complete the proof  in several steps. 

\medskip

\noindent
\emph{Step 1.}
By the third property in both \eqref{u1} and \eqref{u-nu}, for each $j \ge 1$,
\begin{equation}\label{norm-0}
\|u_j - u_0\|_{L^\infty(\Omega)}
\le \sum_{\nu=0}^{j-1} \|u_{\nu+1} - u_\nu\|_{L^\infty(\Omega)}
\le \sum_{\nu=0}^{\infty} \epsilon_{\nu+1}
= \frac{\delta}{2}.
\end{equation}
Hence, the sequence $\{u_\nu\}_{\nu=1}^\infty$ is uniformly bounded on $\Omega$. By the second condition in \eqref{u-nu},  the sequence $\{Du_\nu\}_{\nu=1}^\infty$ is  uniformly bounded on $\Omega$. Consequently, $\{u_\nu\}_{\nu=1}^\infty$ is uniformly bounded in $\bar u + W_0^{1,\infty}(\Omega;\R^m).$

Moreover, by the third and last conditions in \eqref{u-nu}, the sequence $\{u_\nu\}_{\nu=1}^\infty$ is Cauchy in $W^{1,1}(\Omega;\R^m)$. Consequently, there exists $u \in W^{1,1}(\Omega;\R^m)$ such that
\[
 \text{$u_\nu\to u$ \, in   $W^{1,1}(\Omega ;\R^m)$ \, as $\nu\to\infty.$}
\]
We also have  that $u\in \bar u+ W^{1,\infty}_{0}(\Omega ;\R^m).$

Passing to the limit in \eqref{norm-0} yields
 \[
 \|u  -\bar u\|_{L^\infty(\Omega )}     \le  \frac{\delta}{2} <\delta.
\]

\emph{Step 2.}
By \eqref{u-nu} and Lemma~\ref{lem52}, we have
\[
\begin{aligned}
\| \sigma(Du_\nu) - V_\nu \|_{L^1(\Omega)}
&= \int_{\Omega} |f(Du_\nu, V_\nu)| \, dx \\
&= \int_{(Du_\nu,V_\nu)\in S^{r_{\nu+1}}(\lambda_{\nu+1})}
|f(Du_\nu, V_\nu)| \, dx \\
&\quad
+ \int_{(Du_\nu,V_\nu)\notin S^{r_{\nu+1}}(\lambda_{\nu+1})}
|f(Du_\nu, V_\nu)| \, dx \\
&\le C\bigl[(1-\lambda_{\nu+1}) + \epsilon_\nu\bigr] |\Omega| \to 0
\quad \text{as } \nu \to \infty.
\end{aligned}
\]
Moreover, since $\operatorname{div} V_\nu = 0$ in $\Omega$ (in the sense of distributions), we obtain
\begin{equation}\label{pre-weak}
\int_{\Omega} \langle \sigma(Du_\nu), D\varphi \rangle \, dx
=
\int_{\Omega} \langle f(Du_\nu, V_\nu), D\varphi \rangle \, dx
\quad
\forall\, \varphi \in C_0^\infty(\Omega;\R^m).
\end{equation}

Since $\{\|Du_\nu\|_{L^\infty(\Omega)}\}$ is uniformly bounded and
\[
\|u_\nu - u\|_{W^{1,1}(\Omega)} \to 0,
\qquad
\|f(Du_\nu, V_\nu)\|_{L^1(\Omega)} \to 0
\quad \text{as } \nu \to \infty,
\]
we may pass to the limit in \eqref{pre-weak} to obtain
\[
\int_{\Omega} \langle \sigma(Du), D\varphi \rangle \, dx
= 0
\quad
\forall\, \varphi \in C_0^\infty(\Omega;\R^m).
\]

\medskip

Consequently, Steps~1 and~2 show that $u$ is a Lipschitz weak solution of the Dirichlet problem \eqref{DP} and satisfies
\[
\|u - \bar u\|_{L^\infty(\Omega)} < \delta,
\]
as required in Theorem~\ref{mainthm}.

\medskip

\emph{Step 3.}    Let $y_0 \in \Omega$, and let $U$ be any open subset of $\Omega$ containing $y_0$.
Choose $l_0>0$ such that $Q_{y_0,2l_0} \subset U$.
Since
\[
|Q_{y_0,l_0}| = \bigl| Q_{y_0,l_0} \cap \bigl( \cup_{j=0}^{M_\nu} Q^\nu_j \bigr) \bigr|
\quad \text{for all } \nu \ge 1,
\]
it follows from \eqref{u-nu-0} that there exists $q_0 \ge 2$ such that, for all $q \ge q_0$,
\[
Q_{y_0,l_0} \cap Q^q_j \neq \emptyset
\]
for some $0 \le j \le M_q$ with
$j \neq \sum_{k=1}^i M^q_{k-1} + i$ $(0 \le i \le M_{q-1})$.
Thus, for $q$ sufficiently large, by \eqref{u-nu} we have
\begin{equation}\label{p-meas-0}
\overline{Q}^q_j \subset Q_{y_0,2l_0} \subset U .
\end{equation}

 By Lemma~\ref{lem512}, for all $p>q$ and $1 \le k \le N$,
\[
\begin{aligned}
\bigl| Q^q_j \cap \{ Du_p \in \mathbb{P}(S_k^{r_{p+1}}(\lambda_{p+1})) \} \bigr|
&\ge \bigl| Q^q_j \cap \{ (Du_p, V_p) \in S_k^{r_{p+1}}(\lambda_{p+1}) \} \bigr| \\
&\ge \frac{1}{2}\,
\frac{\lambda_{q+2}}{\lambda_{p+1}}
(\lambda_{q+2}-\lambda_{q+1})
(\lambda_{q+2}-\delta_2)^{N-1}
\delta_1\, |Q^q_j| ,
\end{aligned}
\]
where $\mathbb{P}(\rho) = \rho^1$ for all
$\rho = (\rho^1,\rho^2) \in \R^{m \times n} \times \R^{m \times n}$.

Letting $p \to \infty$, and using the facts that
$u_p \to u$ strongly in $W^{1,1}(\Omega;\R^m)$,
$\lambda_p \to 1$, and $r_p \to r_0$, we obtain
\begin{equation}\label{p-meas}
\bigl| Q^q_j \cap \{ Du \in \overline{\mathbb{P}(S_k^{r_0}(1))} \} \bigr|
\ge \frac{1}{2}\,
\lambda_{q+2}
(\lambda_{q+2}-\lambda_{q+1})
(\lambda_{q+2}-\delta_2)^{N-1}
\delta_1\, |Q^q_j| > 0
\end{equation}
for all $1 \le k \le N$.

Let
\[
d_0 := \min_{k \neq l}
\dist\!\left(
\overline{\mathbb{P}(S_k^{r_0}(1))},
\overline{\mathbb{P}(S_l^{r_0}(1))}
\right).
\]
By condition (P1)(ii) of Condition $O_N$, we have $d_0 > 0$.
Hence, by \eqref{p-meas-0} and \eqref{p-meas},
\[
\|Du(y)-Du(z)\|_{L^\infty(U \times U)}
\ge \|Du(y)-Du(z)\|_{L^\infty(Q^q_j \times Q^q_j)}
\ge d_0 ,
\]
which yields
\[
\omega_{Du}(y_0) \ge d_0
\quad \text{for all } y_0 \in \Omega .
\]
Consequently, $Du$ is not essentially continuous at any point of $\Omega$. Hence, $u$ is nowhere $C^1$ in $\Omega.$

\medskip

This completes the proof of Theorem~\ref{mainthm}.

\subsection*{Acknowledgements}

M.~Liao was supported by the National Natural Science Foundation of China (No.~12401290) and the Natural Science Foundation of Jiangsu Province (No.~BK20230946).

The authors would like to thank Dr.~Seonghak Kim for his invaluable comments and suggestions, which significantly improved the presentation of this paper.


\begin{thebibliography}{99}

\bibitem{AF84}
E. Acerbi  and N. Fusco,  {\em Semicontinuity problems in the calculus of variations,}
Arch. Rational Mech. Anal., {\bf 86} (1984),  125--145.
 
       \bibitem{Ba77} J. M. Ball, {\em Convexity conditions and existence theorems in nonlinear elasticity,} Arch. Rational Mech. Anal., {\bf 63} (1977), 337--403.
 

\bibitem{BDM13}
{V. B\"ogelein, F. Duzaar and G. Mingione,} {\em The regularity of general parabolic systems with degenerate diffusion,} Memoirs of Amer. Math. Soc., {\bf 221} No. 1041, 2013.

 

  \bibitem{CZ92}
 J. Chabrowski and K. Zhang, {\em Quasi-monotonicity and perturbated systems with critical growth,} Indiana Univ. Math. J., {\bf 41}(2)  (1992), 483--504.


 
 \bibitem{CT25}   M. Colombo and R. Tione, {\em Non-classical solutions of the $p$-Laplace equation,} J. Eur. Math. Soc., {\bf 27}(12) (2025), 4845--4890. https://doi.org/10.4171/JEMS/1462
 

 \bibitem{Da08}
B. Dacorogna, ``Direct Methods in the Calculus of Variations," Second Edition. Springer-Verlag, Berlin, Heidelberg, New York, 2008.

\bibitem{DM97} B. Dacorogna and P. Marcellini, {\em General existence theorems for Hamilton-Jacobi equations in the scalar and vectorial cases,} Acta Math., {\bf 178}(1) (1997), 1--37.

\bibitem{DM99}
B. Dacorogna and P. Marcellini, ``Implicit Partial Differential Equations."  Birkh\"auser Boston, Inc., Boston, MA, 1999.



 \bibitem{DG00}
F.~Duzaar and J.~F.~Grotowski,
{\em Optimal interior partial regularity for nonlinear elliptic systems: The method of $A$-harmonic approximation,}
Manuscripta Math., \textbf{103} (2000), 267--298.


 

\bibitem{Ev86}
L.~C.~Evans, {\em Quasiconvexity and partial regularity in the
calculus of variations,} Arch. Rational Mech.
Anal., {\bf 95} (1986), 227--252.

\bibitem{Fu87}
 M. Fuchs, {\em Regularity theorems for nonlinear systems of partial differential equations
under natural ellipticity conditions,} Analysis, {\bf 7} (1987),  83--93.

\bibitem{FH85}
N. Fusco and J. Hutchinson, {\em$C^{1,\alpha}$ partial regularity of 
functions minimizing quasiconvex integrals,} Manuscripta Math., {\bf 54} (1985), 121--143.

\bibitem{Gi83} 
M. Giaquinta, ``Multiple Integrals in the Calculus of Variations and Nonlinear Elliptic
Systems."  Princeton Univ. Press, Princeton (1983).

 \bibitem{Gr86}
 M. Gromov,  ``Partial Differential Relations."   Springer-Verlag, Berlin, 1986.

\bibitem{GKY26}
B. Guo, S. Kim and B. Yan, {\em Irregular diffusions and loss of regularity in polyconvex gradient flows,} (Preprint) arXiv:2601.01035v1 [math.AP] 3 Jan 2026


\bibitem{Ha95}
C. Hamburger, {\em Quasimomotonicity, regularity and duality for nonlinear systems of partial differential equations,} Annali di Matematica pura ed applicata (IV), Vol. {\bf CLXIX} (1995),  321--354.

  

 \bibitem{Jo24}
 C. J. P. Johansson, {\em Wild solutions to  scalar Euler-Lagrange equations,} Trans. Amer. Math. Soc., {\bf 377}(7) (2024), 4931--4960.
  

\bibitem{KY15}
S. Kim and B. Yan, {\em Convex integration and infinitely many weak solutions to the Perona-Malik equation in all dimensions}, SIAM J. Math. Anal. {\bf 47}(4) (2015), 2770--2794.


\bibitem{KY17}
S. Kim and B. Yan, {\em On Lipschitz solutions for some  forward-backward parabolic  equations. II: the case against Fourier}, Calc. Var. {\bf 56}(3) (2017), Art. 67, 36 pp.

\bibitem{KY18}
S. Kim and B. Yan, {\em On Lipschitz solutions for some  forward-backward parabolic  equations}, Ann. Inst. H. Poincar\'e Anal. Non Lin\'eaire {\bf 35}(1) (2018), 65-100.


\bibitem{KY25}
S. Kim and B. Yan, {\em On integral convexity, variational solutions and nonlinear semigroups,} J. Math. Pures Appl. (9) {\bf 194} (2025), Paper No. 103662, 37 pp.

 \bibitem{La96}
R. Landes, {\em Quasimonotone versus pseudomonotone,} Proc. Royal Soc. Edinburgh, {\bf 126A} (1996), 705--717.

 \bibitem{Mi06}
G. Mingione, {\em Regularity of minima: an invitation to the dark side of the Calculus of Variations,} Application of Mathematics, {\bf 51} (2006), 355--426.

  \bibitem{Mo52}
C.~B.~Morrey,
{\em Quasiconvexity and the lower semicontinuity of multiple integrals,} Pacific J. Math., {\bf 2} (1952), 25--53.

 

\bibitem{MSv99}
 S. M\"uller and V. \v Sver\'ak, {\em Convex integration with constraints and applications to phase transitions and partial differential equations}, J. Eur. Math. Soc., {\bf 1} (4) (1999), 393--422.

\bibitem{MSv03} {S. M\"uller and V. \v Sver\'ak}, {\em Convex integration for Lipschitz mappings and counterexamples to regularity},  Ann. of Math. (2) {\bf 157}(3)   (2003), 715--742.

\bibitem{MSy01}
S.~M\"uller and M.~A.~Sychev, {\em Optimal existence theorems for
nonhomogeneous differential inclusions,} J. Funct. Anal., {\bf 181}(2)
(2001), 447--475.


 

  \bibitem{Sz04}
 L. Sz\'ekelyhidi, {\em The regularity of critical points of polyconvex functionals,} Arch. Rational Mech.  Anal.  {\bf 172}(1) (2004), 133--152.


 \bibitem{Ta93} L. Tartar, {\em Some remarks on separately convex functions,}  in ``Microstructure and
Phase Transitions," IMA Vol. Math. Appl., {\bf 54}  (D. Kinderlehrer, R. D. James,
M. Luskin and J. L. Ericksen, eds.), Springer-Verlag, New York (1993), 191--204.


 \bibitem{Ya20}
B.~Yan, {\em Convex integration for diffusion equations and Lipschitz
solutions of polyconvex gradient flows,} Calc. Var.  (2020), 59:123. https://doi.org/10.1007/s00526-020-01785-7

 \bibitem{Ya22}
B.~Yan, {\em On nonuniqueness and nonregularity for gradient flows of polyconvex functionals,}
Calc. Var. (2024) 63:4
https://doi.org/10.1007/s00526-023-02609-0

 
\bibitem{Zh86}
 K. Zhang, {\em On the Dirichlet problem for a class of quasilinear elliptic systems of
partial differential equations in divergence form,} in ``Proceedings  of Tianjin Conference on Partial Differential
Equations in 1986," (S. S. Chern ed.) Lecture Notes in Mathematics, {\bf 1306}, pp.
262--277. Springer, Berlin, Heidelberg, New York, 1988.


\bibitem{Zh92}
 K. Zhang, {\em Remarks on quasiconvexity and stability of equilibria for variational integrals,} 
 Proceedings of the American Mathematical Society, {\bf 114}(4) (1992),  927--930.
 
\bibitem{Zh06}
K. Zhang, {\em On existence of weak solutions for one-dimensional forward-backward diffusion equations}, J. Differential Equations {\bf 220} (2) (2006), 322--353.


\end{thebibliography}
\end{document}